\documentclass{amsart}
\usepackage{amsmath,amssymb,amsthm}
\usepackage[dvips]{color}
\newcommand{\R}{{\mathbb R}}

\newcommand{\C}{{\mathbb C}}
\newcommand{\Z}{{\mathbb Z}}

\newcommand{\bz}{\boldsymbol{z}}

\newcommand{\bm}{\boldsymbol{m}}

\newcommand{\PD}[1]{\frac{\partial}{\partial {#1}}}
\newcommand{\OL}[1]{\overline{#1}}
\newcommand{\WT}[1]{\widetilde{#1}}

\newcommand{\bS}{\partial\Sigma}
\newcommand{\Si}{\Sigma}

\newcommand{\I}{\sqrt{-1}}

\newcommand{\CL}{\mathcal{L}}

\newcommand{\bsg}{\boldsymbol{\Sigma}}

\newtheorem{theorem}{Theorem}[section]
\newtheorem{corollary}[theorem]{Corollary}
\newtheorem{lemma}[theorem]{Lemma}
\newtheorem{proposition}[theorem]{Proposition}
\newtheorem{proposition-definition}[theorem]{Proposition - Definition}
\newtheorem{definition}[theorem]{Definition}
\newtheorem{remark}[theorem]{Remark}
\newtheorem{example}[theorem]{Example}

\title[Chern-Weil Maslov index and its orbifold analogue] {Chern-Weil Maslov index\\ and its orbifold analogue}
\author{Cheol-Hyun Cho, Hyung-Seok Shin}

\begin{document}
\thanks{Authors were supported by the Basic research fund 2009-0074356 funded by the Korean
government.}
\begin{abstract}
We give  Chern-Weil definitions of the Maslov indices of bundle pairs over a Riemann surface $\Sigma$ with boundary, which consists of symplectic vector bundle on $\Sigma$ and a Lagrangian subbundle on $\partial \Sigma$ as well as its generalization for transversely intersecting Lagrangian boundary
conditions.
We discuss their properties and relations to the known topological definitions. As a main application, we extend Maslov index to the case with orbifold interior singularites, via curvature integral, and find also an analogous topological definition in these cases.
\end{abstract}

\maketitle

\section{Introduction}
Maslov index, which plays  important roles in several contexts of geometry and analysis, has been extensively studied in the literature such as in \cite{Ar},\cite{GS},\cite{M},\cite{SZ} to name a few.
For an excellent review, we refer readers to Cappell, Lee and Miller \cite{CLM}.

The Maslov index, which we consider  in this paper, is associated to a bundle pair over a Riemann surface with boundary( or with boundary and boundary punctures). By bundle pair  $(E,L)$ over $\Sigma$, we  mean a symplectic vector bundle $E \to \Sigma$ equipped with compatible   complex structure, together with Lagrangian subbundle
$L \to \partial \Sigma$ over the boundary of $\Sigma$.
In this paper, we give another definition of the Maslov index via curvature integral of orthogonal connections.  This is analogous to the case of $c_1$, which can be defined via Chern-Weil theory as
a curvature integral of connections.

As we work with maps from  Riemann surfaces with boundary and Lagrangian boundary condition on them, we require
the certain orthogonal condition(to preserve the Lagrangian bundle data) of the connection on the boundary $\partial \Sigma$ to define the index.
This is somewhat opposite to the topological definition of the Maslov index. Namely, Maslov index can be defined by taking a trivialization of $E$ over $\Sigma$ and measure how the Lagrangian subbundle data is twisted along the boundary. But in Chern-Weil definition, we require the orthogonal condition on the boundary and measure how much the connection is twisted over the interior of $\Sigma$ by taking an integral of its curvature. We provide two proofs of the equality of the proposed Chern-Weil Maslov index and the standard Maslov index which is  defined
as winding number in Lagrangian grassmanian.

We also consider its generalization, arising from the maps with boundary on transversely intersecting Lagrangian submanifolds, as in the definition of Fukaya category(see \cite{Fu} for example). Chern-Weil definition of the Maslov index also works in this case, and we will find a relation of such an index with the Fredholm index of the related Cauchy-Riemann operator.

The main motivation for us to develop the Chern-Weil definition of Maslov index is to extend the Maslov index to the  orbifold setting.
In the last section, we discuss the case with interior orbifold singularites. Namely, we will consider
bundle pairs over Riemann surface $\bsg$ with boundaries and interior orbifold singularities. In this case, we consider not vector bundles but orbi-bundles, and the standard topological definition is not available since
 orbi-bundles are {\em not} trivial bundles over $\Sigma$. But the Chern-Weil definition which we give in this paper, can be easily extended to this setting.

 Alternative approach to define Maslov index in this orbifold case is to take branch covering
 $br:\WT{\Si} \to \bsg$ by a smooth Riemann surface $\WT{\Si}$ with boundary, and
 define the Maslov index to be that of the pull-back bundle over $\WT{\Si}$ divided by
 the degree of the branch covering map $br$. We use Chern-Weil definition to prove that
such an index is independent of the choice of a branch covering map, and prove that it is the same as Chern-Weil Maslov index.

 We also discuss the relationship with the desingularized Maslov index which is defined by the first author and Poddar in \cite{CP} (following the desingularization of Chen and Ruan \cite{CR})

Here is the outline of the paper. In section 2, we give a Chern-Weil definition of the Maslov index of
a bundle pair, and in section 3, we gives two proofs of the theorem that Chern-Weil index equals the usual topological Maslov index.  The first proof is easier, but the second proof extends to the case of orbifolds.
In section 4, we show properties of Chern-Weil index such that the definition does not depend on the choice of orthogonal connection or an   complex structure.
In section 5, we consider the case of transversely intersecting Lagrangian boundary condition and compare it to the Fredholm index of related Cauchy-Riemann operator. In section 6, we extend the above definition to the orbifold case, establish a topological definition using branch covering maps, and find a relation to the desingularized Maslov index.

\section{Maslov index via orthogonal connection}
In this section, we define an $L$-orthogonal connection( c.f. \cite{V}) of a bundle pair to give a Chern-Weil definition of its Maslov index.

We recall the well-known definition of $c_1(E)$ of a complex line bundle $E$ via curvature integral.
\subsection{Chern-Weil definition of the first chern class}
Let $\nabla$ be a connection of a complex line bundle $E$ over a closed surface $\Sigma$, and denote by $F_\nabla$ its curvature.
 The following theorem is well-known
\begin{theorem}
The curvature $F_\nabla$ satisfies the following:
\begin{enumerate}
\item $dF_\nabla=0$
\item If $\nabla$ and $\nabla'$ are two connections of $E$, then
$\nabla = \nabla' + \eta$ for a 1-form $\eta$ and
$F_\nabla = F_{\nabla'} + d\eta$
\item  The first chern number $c_1(E)([\Si])$ is given by
\begin{equation}\label{chernintegral}
c_1(E)([\Si]) = \frac{ \I}{2 \pi}\int_{\Si}F_\nabla,
\end{equation}
and it is independent of the choice of a connection.
\end{enumerate}
\end{theorem}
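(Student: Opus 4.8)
The plan is to treat the three parts in order, obtaining (3) from (1) and (2) together with a zero-counting argument. For (1) I would work in a local trivialization, where the connection reads $\nabla = d + \theta$ for a $\C$-valued $1$-form $\theta$ (the endomorphism bundle of a line bundle being canonically trivial), so that the curvature is the $2$-form $F_\nabla = d\theta$. Then $dF_\nabla = d(d\theta) = 0$ is immediate; one checks that $d\theta$ is independent of the chosen trivialization since a transition function $g$ changes $\theta$ by the closed form $d\log g$, so the local expressions patch to a globally defined closed $2$-form.

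For (2) I would use that the space of connections on $E$ is an affine space modeled on $\Omega^1(\Si;\mathrm{End}(E)) = \Omega^1(\Si;\C)$; hence $\nabla - \nabla'$ is tensorial and given by a global $1$-form $\eta$. Writing both connection forms in a common local trivialization and applying (1) gives $F_\nabla = d(\theta' + \eta) = F_{\nabla'} + d\eta$.

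For (3), independence of the connection is then formal: for two connections the difference of the curvature integrals is $\tfrac{\I}{2\pi}\int_\Si d\eta$, which vanishes by Stokes' theorem since $\Si$ is closed. It remains to identify the common value with $c_1(E)([\Si])$. Here I would fix a Hermitian metric and a unitary connection, and choose a smooth section $s$ of $E$ transverse to the zero section, vanishing at finitely many points $p_1,\dots,p_k$ with local degrees $n_1,\dots,n_k$; by the characterization of $c_1$ as the Euler number of the underlying oriented real rank-two bundle one has $\sum_j n_j = c_1(E)([\Si])$. Away from the zeros the unit section $e = s/|s|$ trivializes $E$, so that $F_\nabla = d\theta$ for the associated (purely imaginary) connection form $\theta$. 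Excising small disks $D_\epsilon(p_j)$ and applying Stokes' theorem gives $\int_{\Si\setminus\bigcup_j D_\epsilon(p_j)} F_\nabla = -\sum_j \int_{\partial D_\epsilon(p_j)} \theta$, and as $\epsilon \to 0$ each boundary integral converges to $2\pi\I\,n_j$, the winding of the frame $e$ around $p_j$. Multiplying by $\tfrac{\I}{2\pi}$ and using $\I^2 = -1$ yields $\tfrac{\I}{2\pi}\int_\Si F_\nabla = \sum_j n_j = c_1(E)([\Si])$.

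The main obstacle is this final localization step: one must control the boundary integrals $\int_{\partial D_\epsilon(p_j)} \theta$ in the limit $\epsilon \to 0$ and verify that they recover exactly the local degree $n_j$, i.e., that the curvature concentrates correctly at the zeros of $s$. Everything else follows formally from the affine structure of the space of connections and Stokes' theorem on the closed surface $\Si$.
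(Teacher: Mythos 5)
The paper does not actually prove this statement; it is quoted as ``well-known'' and used as a black box, so there is no in-paper argument to compare against. Your proposal is the standard Chern--Weil argument for line bundles and it is correct. Parts (1) and (2) are exactly right: locally $F_\nabla=d\theta$, the transition functions change $\theta$ by the closed form $d\log g$, and the space of connections is affine over $\Omega^1(\Si;\C)$ because $End(E)$ is canonically trivial for a line bundle. For part (3), the invariance is indeed immediate from (2) and Stokes on the closed surface. The localization step you flag as the main obstacle does go through, and it is worth recording why: near a zero $p_j$ pick a smooth local frame $\epsilon_j$ and write $s=f\epsilon_j$, so that $e=(f/|f|)\epsilon_j$ and the connection form $\theta_e$ in the frame $e$ satisfies $\theta_e=\theta_{\epsilon_j}+d\log(f/|f|)$. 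The term $\theta_{\epsilon_j}$ is a smooth $1$-form near $p_j$, so its integral over $\partial D_\epsilon(p_j)$ tends to $0$, while $\int_{\partial D_\epsilon(p_j)}d\log(f/|f|)=2\pi\I\,n_j$ is exactly the local winding number of $f$; this yields $\int_\Si F_\nabla=-2\pi\I\sum_j n_j$ and hence $\frac{\I}{2\pi}\int_\Si F_\nabla=\sum_j n_j=c_1(E)([\Si])$ as you claim. As an aside, your route (Euler number via zeros of a section) is a natural complement to the mechanism the paper actually exploits later in Section 3, where the relation between curvature integrals and holonomy/winding of parallel frames plays the role that your zero-counting plays here; either viewpoint correctly identifies the normalization $\frac{\I}{2\pi}$.
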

In fact, $\eta$ is a $End(E)$-valued 1-form. However, since $End(E)$ is a trivial line bundle, we can consider the $\eta$ as a 1-form on $\Si$ after fixing a trivialization of $End(E)$.
Note that the difference of curvature integrals for two connections $\nabla$ and $\nabla'$
is
$$\int_{\Si}F_\nabla - \int_{\Si}F_{\nabla'} = \int_{\Si} d\eta = \int_{\partial \Si} \eta =0.$$
But for the case with Riemann surfaces with boundary, and  the invariance of
the curvature integrals does not hold for arbitrary connections since $\partial \Si \neq 0$.

To obtain an invariant curvature integrals for the case with boundaries, we  introduce the notion of an orthogonal connection.
\subsection{Orthogonal connection for a bundle pair}
We first recall a definition of bundle pair. Let $\Sigma$ be a Riemann surface with boundary $\bS$.
\begin{definition}
We denote by a {\em bundle pair $(E, L) \to (\Si,\bS)$}, a symplectic  vector bundle $(E,\omega_E)$ over $\Si$, a Lagrangian subbundle   $L$ over $\partial \Si$.
\end{definition}
We also consider a compatible complex structures $J$ of $(E,L)$ which makes $E$ a complex
vector bundle with an induced inner product $g(\cdot,\cdot) = \omega(\cdot,J\cdot)$.
Denote by $g_\C = g + \sqrt{-1} \omega$  the induced hermitian inner product of $E$.
A connection $\nabla$ is said to be {\em unitary} if $\nabla$ is compatible
with the metric $g_\C$, or equivalently, the holonomy of $\nabla$ lies in $U(n)$.

\begin{definition}
Let $\nabla$ be a unitary connection on $E \to \Si$.
Then, $\nabla$  is called {\em $L$-orthogonal } if the parallel transport  along $\bS$ via $\nabla$ preserves Lagrangian subbundle $L \to \bS$.
\end{definition}

To construct such an $L$-orthogonal connection of a bundle pair $(E,L) \to (\Si,\bS)$,
we can proceed as follows.
Given a $g$-orthogonal  metric connection $\nabla$ for $L$,
by defining $\nabla J e = J \nabla e$  for any local section $e$ of $L$, we can
extend the connection to $E \to \bS$, and by trivially extending to the neighborhood of $\bS$ and using the partition of unity, we can extend it to a unitary connection to $E \to \Si$.

\begin{remark}
The name, orthogonal connection is given following the work of Vaisman, who considered orthogonal unitary connection in \cite{V}. He considered  principal $Sp(2n)$-bundles  and
related principle $U(n)$ subbundle (by choosing an   complex structure), and
a  unitary connection  preserving certain principal $O(n)$-subbundle ( which is defined from a  real unitary frame of Lagrangian subbundle) was called orthogonal unitary connection. He considered the case that the Lagrangian subbundle is defined everywhere(not just on the boundary) and hence, the Maslov index vanishes in these cases. He used orthogonal unitary connections and Chern-Weil theory to
study secondary invariants.
\end{remark}

Note that we only require orthogonality along $\bS$. Another natural (and more restrictive)
assumption would be to take a tubular neighborhood of $\bS$ and require the connection to be a product form, i.e. it is a pullback of the orthogonal connection on $\bS$ along normal direction.
But we remark that  the resulting curvature integral will be the same, which can be proved as in the proof  of the proposition \ref{lem:stoke}

\begin{example}\label{ex1}
Consider a Lagrangian submanifold $L=S^1 \subset \C$, where $\C$ is equipped with standard symplectic structure $\omega_0=dx \wedge dy$. Consider  inclusion of a unit disc $u: D^2 \subset \C$
(so that $u(\partial D^2) = L$).

Consider the pull-back bundle $u^*T\C \cong D^2 \times \C$  and its Lagrangian subbundle
$u|_{\partial D^2}^*TL$.
Consider the trivialization of $u^*T\C$ as above and denote by $(r,\theta)$ the polar coordinate of $D^2$.
For the complex frame $\{\epsilon(r,\theta):= \left. \PD{x} \right|_{(r,\theta)}\}$ of $u^*T\C$ ,
we define a connection $$\nabla:=d-\sqrt{-1}rd\theta,$$

In this trivialization, $\sqrt{-1}e^{\sqrt{-1}\theta}\epsilon(1,\theta)$ defines a real frame of $L$ and
one can check that
 $$\nabla_{\PD{\theta}} \sqrt{-1}e^{\sqrt{-1}\theta}\epsilon(1,\theta)=
 \big( (d - \sqrt{-1}rd\theta)\sqrt{-1}e^{\sqrt{-1}\theta} \big)(\PD{\theta})\epsilon(1,\theta)=0  .$$
 Therefore the connection $\nabla$ is an $L$-orthogonal unitary connection.
\end{example}

\subsection{Maslov index}
Recall the definition of a Maslov index for a bundle pair $(E, L) \to \Si$, where
$E$ is a symplectic vector bundle over $\Si$,  and $L$ is a Lagrangian subbundle over $\partial \Si$.
Let $J$ be a compatible   complex structure on $E$, and consider $E$ as a complex vector bundle.

Recall the following well-known lemma.
\begin{lemma}[\cite{Oh}]
Consider the subset
$$\tilde{\mathcal{U}}(n)=\{A\in U(n,\C)|A=A^t\}.$$
 Then the map
$$B:\Lambda(n)=U(n)/O(n)\to \tilde{\mathcal{U}}(n); A \mapsto A\bar{A}^{-1}$$
is a diffeomorphism.
\end{lemma}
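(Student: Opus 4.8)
The plan is to first reduce the formula for $B$ to something tractable. Since $A\in U(n)$ one has $\bar A^{-1}=(\bar A)^{*}=A^{t}$, so that $B(A)=A\bar A^{-1}=AA^{t}$. I would then check directly that this lands in $\tilde{\mathcal U}(n)$: it is symmetric because $(AA^{t})^{t}=AA^{t}$, and it is unitary because $A^{t}=\overline{A^{*}}=\overline{A^{-1}}$ is the complex conjugate of a unitary matrix, hence itself unitary, so $AA^{t}$ is a product of two elements of $U(n)$. To see that $B$ descends to the quotient $\Lambda(n)=U(n)/O(n)$, I would take $R\in O(n)$ and compute $B(AR)=AR\,R^{t}A^{t}=AA^{t}=B(A)$, using $RR^{t}=I$. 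Thus $B:\Lambda(n)\to\tilde{\mathcal U}(n)$ is a well-defined smooth map, induced by the polynomial map $A\mapsto AA^{t}$ through the submersion $U(n)\to U(n)/O(n)$.

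For injectivity, suppose $A_{1}A_{1}^{t}=A_{2}A_{2}^{t}$ and set $R=A_{2}^{-1}A_{1}\in U(n)$. Then $RR^{t}=A_{2}^{-1}(A_{1}A_{1}^{t})(A_{2}^{t})^{-1}=I$, so $R^{t}=R^{-1}=R^{*}=\bar R^{t}$, which forces $R=\bar R$; hence $R$ is real and orthogonal, i.e. $R\in O(n)$, giving $[A_{1}]=[A_{2}]$.

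The main obstacle is surjectivity, i.e. producing a unitary ``symmetric square root'': given $S\in\tilde{\mathcal U}(n)$ I must find $A\in U(n)$ with $AA^{t}=S$. Here I would invoke the Autonne--Takagi factorization, which writes any complex symmetric matrix as $S=V\Lambda V^{t}$ with $V$ unitary and $\Lambda$ diagonal with nonnegative entries equal to the singular values of $S$. Since $S$ is unitary all its singular values equal $1$, so $\Lambda=I$ and $S=VV^{t}=B(V)$. If one prefers a more self-contained route, one can instead diagonalize the real symmetric data attached to $S$ and assemble $A$ from the resulting eigenvectors, but Takagi's theorem is the cleanest input.

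Finally, to upgrade the continuous bijection to a diffeomorphism I would show that $dB$ is everywhere an isomorphism. The group $U(n)$ acts on $\Lambda(n)$ by $[A]\mapsto[gA]$ and on $\tilde{\mathcal U}(n)$ by $S\mapsto gSg^{t}$, and $B$ is equivariant since $B(gA)=gAA^{t}g^{t}=gB(A)g^{t}$; as both actions are transitive it suffices to check one point. At $[I]$ I identify $T_{[I]}\Lambda(n)=\mathfrak u(n)/\mathfrak o(n)$, and writing $X=P+iQ\in\mathfrak u(n)$ with $P$ real skew-symmetric and $Q$ real symmetric, the class of $X$ is represented by $iQ$. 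Differentiating $t\mapsto B(e^{tX})=e^{tX}e^{tX^{t}}$ at $t=0$ gives $dB_{[I]}(X)=X+X^{t}=2iQ$, which is an isomorphism onto $T_{I}\tilde{\mathcal U}(n)=\{iQ: Q=Q^{t}\in M_n(\R)\}$; note both spaces have dimension $n(n+1)/2$. Hence $B$ is a bijective local diffeomorphism, and therefore a diffeomorphism.
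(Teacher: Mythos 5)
Your proof is correct and complete. Note that the paper does not actually prove this lemma --- it is quoted from \cite{Oh} without argument --- so there is no in-paper proof to compare against; your write-up (reduction of $A\bar A^{-1}$ to $AA^{t}$, injectivity via $R=A_2^{-1}A_1$ being forced to be real orthogonal, surjectivity via the Autonne--Takagi factorization with all singular values equal to $1$, and the equivariant tangent-space computation $X\mapsto X+X^{t}=2iQ$ showing $B$ is a local diffeomorphism) is the standard and fully rigorous route, and would serve as a valid self-contained proof of the cited statement.
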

The  Maslov index $\mu(\gamma)$ of an oriented loop $\gamma:S^1\to \Lambda(n) $ is defined to be the winding number of
$$\det\circ B\circ\gamma:S^1\to\C\backslash\{0\}.$$

Now given a  bundle pair $(E, L)\to\Si$, if  $\partial\Si\neq\varnothing$, then  vector bundle $E\to\Si$ can be trivialized.  We fix a symplectic trivialization $\Phi:E\cong\Si\times\C^{n}$, and let $R_1,...,R_h$ be the connected components of $\partial\Si$, with orientation induced by the orientation $\Si$. Then $\Phi(L|_{R_i})$ gives a loop $\gamma_i:S^1\to \Lambda(n)$. Let us denote $\mu(\Phi,R_i):=\mu(\gamma_i)$.

\begin{definition}\label{def:mas}
The Maslov index of the bundle pair $(E,L)$ is defined by
$$\mu(E,L)=\sum_{i=1}^h \mu(\Phi,R_i)$$
where $\Phi:E\to\Si\times\C^n$ is any trivialization.
\end{definition}
The Maslov index is independent of the choice of trivialization $\Phi$, and the choice of an   complex structure $J$. (see \cite{KL} for example.)
\subsection{Chern-Weil Maslov index}
The main objective of this paper is to  give another definition of the Maslov index $\mu_{CW}$ for
the bundle pair $(E,L)\to\Si$ in terms of curvature integral:
\begin{definition}
Let $\nabla$ be a connection on $E$ which restricts, on the boundary of $\Si$, to an $L$-orthogonal unitary connection on $(E|_{\partial\Si},J)$.
The Maslov index of the bundle pair $(E,L)$ is defined by
$$\mu_{CW}(E,L)=\frac{ \I}{\pi}\int_\Si{tr(F_\nabla)}$$
where $F_\nabla\in\Omega^2(\Si,End(E))$ is the curvature induced by $\nabla$.
\end{definition}
\begin{remark}
Note that the denominator of \eqref{chernintegral} is $2\pi$.
\end{remark}

We consider the example \ref{ex1}.
\begin{example}
For the connection $\nabla$ defined in example \ref{ex1},
we have
$$F_{\nabla}=d(-\sqrt{-1}rd\theta)=-\sqrt{-1}dr\wedge d\theta$$
Hence,
 $$\frac{ \I}{\pi}\int_{D^2}{tr(F_\nabla)} = 2.$$
This shows that $\mu_{CW}=2$ and it is equal to the topological Maslov index.
\end{example}

In the following section, we prove that $\mu_{CW}(E,L)$ is independent of the choice of
the orthogonal connection and equal the topological Maslov index $\mu(E,L)$.

\section{Equivalence of two Maslov indices}
We will give two proofs of equivalence of two Maslov indices.
\begin{theorem}\label{thm:main}
Given a bundle pair $(E,L)$, topological Maslov index equals Chern-Weil Maslov index:
$$\mu(E,L) = \mu_{CW}(E,L).$$
\end{theorem}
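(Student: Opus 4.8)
The plan is to reduce the computation of $\mu_{CW}(E,L) = \frac{\sqrt{-1}}{\pi}\int_\Si tr(F_\nabla)$ to a boundary integral via Stokes' theorem, and then identify that boundary integral with the winding number that defines the topological Maslov index. First I would fix a symplectic trivialization $\Phi:E\cong \Si\times\C^n$ (which exists since $\partial\Si\neq\varnothing$), and in this trivialization write the $L$-orthogonal unitary connection as $\nabla = d + A$, where $A$ is a $\mathfrak{u}(n)$-valued $1$-form. Since $F_\nabla = dA + A\wedge A$ and the trace kills the commutator term $A\wedge A$, we have $tr(F_\nabla) = d\,tr(A)$, so that
\begin{equation*}
\frac{\sqrt{-1}}{\pi}\int_\Si tr(F_\nabla) = \frac{\sqrt{-1}}{\pi}\int_\Si d\,tr(A) = \frac{\sqrt{-1}}{\pi}\int_{\partial\Si} tr(A) = \sum_{i=1}^h \frac{\sqrt{-1}}{\pi}\int_{R_i} tr(A).
\end{equation*}
This already shows the curvature integral is connection-independent relative to its boundary behavior, and localizes the whole computation to the boundary circles $R_i$.

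The heart of the argument is to show that for each component $R_i$, the boundary integral $\frac{\sqrt{-1}}{\pi}\int_{R_i} tr(A)$ equals the topological contribution $\mu(\Phi,R_i)$, i.e. the winding number of $\det\circ B\circ\gamma_i$. To do this I would use the $L$-orthogonality condition. Along $R_i$, the connection preserves $L$, so if I choose a unitary frame of $E|_{R_i}$ adapted to $L$ (a real orthonormal frame of $L$, extended $\C$-linearly to a unitary frame of $E$), then in that adapted frame the connection matrix is $\mathfrak{o}(n)$-valued, hence trace-free in the imaginary part that matters. The gauge transformation $g:R_i\to U(n)$ relating the fixed trivialization $\Phi$ to this adapted frame is precisely the frame that trivializes $L$, and under a gauge change $A \mapsto gAg^{-1} + g\,dg^{-1}$ the trace transforms as $tr(A)\mapsto tr(A') + tr(g\,dg^{-1})$. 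Since the adapted connection contributes no trace, $\int_{R_i} tr(A)$ reduces to $\int_{R_i} tr(dg\,g^{-1}) = \int_{R_i} d\log\det g$, which computes the winding number of $\det g$. The remaining task is to check that $\det g$, built from the Lagrangian frame, winds exactly as $\det\circ B\circ\gamma_i$ does; this comes from the relation $A\bar A^{-1}$ in the cited lemma, since if $A=g$ represents the Lagrangian subspace in $U(n)/O(n)$ then $\det(A\bar A^{-1}) = \det(g)/\overline{\det(g)} = (\det g)^2/|\det g|^2$, whose winding is twice that of $\det g$ — accounting precisely for the factor of $2$ built into the $\frac{\sqrt{-1}}{\pi}$ (rather than $\frac{\sqrt{-1}}{2\pi}$) normalization.

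The main obstacle I anticipate is bookkeeping the factors of two and the orientation conventions so that the normalization $\frac{\sqrt{-1}}{\pi}$ lands exactly on the integer winding number, as confirmed by the disc example giving $\mu_{CW}=2$. The subtle point is that the Maslov index is defined through $B(A)=A\bar A^{-1}$, which squares the determinant, so the Lagrangian frame's determinant winds half as fast as the Maslov loop; reconciling this with the $\pi$ (not $2\pi$) in the denominator requires care. A secondary technical point is verifying that $tr(A)$ restricted to the adapted $\mathfrak{o}(n)$-valued connection genuinely contributes nothing: since $\mathfrak{o}(n)$ matrices are real antisymmetric they are trace-free, so $\sqrt{-1}\,tr(A_{\mathrm{adapted}})$ vanishes, but one must confirm the $\C$-linear extension of a real orthogonal connection to the unitary frame indeed stays in the relevant subalgebra. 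Once these normalization and algebraic checks are in place, summing over the components $R_i$ and invoking the connection-independence from the Stokes computation yields $\mu(E,L)=\mu_{CW}(E,L)$, and since the left side is already known to be trivialization-independent, so is the right.
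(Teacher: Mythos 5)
Your proposal is correct and is essentially the paper's first proof of Theorem \ref{thm:main}: both hinge on $tr(A\wedge A)=0$ to reduce the curvature integral to the boundary (you via Stokes' theorem and a gauge transformation to an adapted $\mathfrak{o}(n)$-valued frame, the paper via the induced connection on $\Lambda^n E$ and the holonomy--curvature relation), and both then identify the boundary term with the winding of the determinant of a Lagrangian frame, which is half that of $\det\circ B\circ\gamma_i$ because $B(A)=A\bar{A}^{-1}$ squares the determinant --- exactly matching the $\pi$ in the normalization. The only caveat, which the paper shares, is that the adapted frame (hence $\det g$) may exist only as a path rather than a loop when $L|_{R_i}$ is non-orientable, so its ``winding'' is a priori a half-integer; this is harmless since only $(\det g)^2$ enters.
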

The first proof in subsection 3.1 is easier, but the second proof in subsection 3.2 using doubling construction can be extended to the case of orbifolds, and will be used in a later section.

\subsection{First proof of $\mu = \mu_{CW}$}
\begin{proof}
Consider a bundle pair $(E,L)$ with orthogonal connection $\nabla$.
We fix an   complex structure $J$ of $E$ and regard $E$ as a complex vector
bundle.
Consider $\Lambda^{n}E$ the top exterior bundle of $E$, with an induced connection $\WT\nabla$.
We have a trivialization
$\Phi:E\to\Si\times\C^n$ as a complex vector bundle since $\partial \Sigma \neq 0$.
With respect to the standard frame $\{\epsilon_1,\cdots,\epsilon_n\}$ of $\Si \times \C^n$,
we can write $\nabla=d+A$ for some $n\times n$-matrix-valued one form $A=(a_{ij})$. Then
it is easy to see that $\WT\nabla=d+tr(A)$ with respect to the frame$\{\epsilon:=\epsilon_1\wedge\cdots\wedge\epsilon_n\}$.

Recall that the curvature of $\nabla$ and $\WT\nabla$ is given as
$$F_\nabla = dA + A\wedge A,\; F_{\WT\nabla}=d(tr(A)).$$

\begin{lemma}
$$\int_\Si tr(F_\nabla)=\int_\Si tr(F_{\WT\nabla}).$$
\end{lemma}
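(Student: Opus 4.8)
The plan is to prove the identity $\int_\Si tr(F_\nabla)=\int_\Si tr(F_{\WT\nabla})$ by computing the trace of $F_\nabla$ directly from the formula $F_\nabla = dA + A\wedge A$ already recorded in the excerpt. Taking traces, I would write $tr(F_\nabla) = tr(dA) + tr(A\wedge A) = d(tr(A)) + tr(A\wedge A)$, using that trace commutes with exterior differentiation. Since $F_{\WT\nabla} = d(tr(A))$, the claim reduces to showing that the remaining term integrates to zero, i.e.
\begin{equation*}
\int_\Si tr(A\wedge A) = 0.
\end{equation*}

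The key observation is that $tr(A\wedge A) = 0$ as a $2$-form \emph{pointwise}, not merely after integration. For a matrix-valued one-form $A=(a_{ij})$, one has $tr(A\wedge A) = \sum_{i,j} a_{ij}\wedge a_{ji}$, and since each $a_{ij}$ is a scalar one-form, the wedge of one-forms is anti-commutative: $a_{ij}\wedge a_{ji} = -a_{ji}\wedge a_{ij}$. Swapping the summation indices $i \leftrightarrow j$ shows the sum equals its own negative, hence vanishes identically. This is the same well-known fact that underlies the observation that $tr(F_\nabla)$ is intrinsically closed (it is the curvature of the determinant line bundle). With $tr(A\wedge A)=0$ established, I immediately get $tr(F_\nabla) = d(tr(A)) = tr(F_{\WT\nabla})$ at the level of forms, so the integrals agree trivially.

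I do not anticipate a serious obstacle here, since both the anticommutativity of scalar one-forms and the commutation of trace with $d$ are formal. The only point requiring a small amount of care is that the identity $tr(F_\nabla)=tr(F_{\WT\nabla})$ holds with respect to the fixed trivialization $\Phi$ and the associated frames; but because $tr(F_\nabla)$ is a globally well-defined $2$-form on $\Si$ (the trace is invariant under the change-of-frame conjugation), the local computation in any frame gives the same global form, so there is no inconsistency. Thus the lemma follows, and in fact the stronger pointwise equality $tr(F_\nabla) = tr(F_{\WT\nabla})$ holds, which is what makes the subsequent reduction of the Maslov index to the determinant (top exterior power) line bundle legitimate.
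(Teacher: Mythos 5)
Your proof is correct and is essentially identical to the paper's: both reduce the claim to the pointwise vanishing of $tr(A\wedge A)=\sum_{i,j}a_{ij}\wedge a_{ji}$, which follows from the anticommutativity of scalar one-forms (the off-diagonal terms cancel in pairs and the diagonal terms $a_{ii}\wedge a_{ii}$ vanish). Your additional remark that the resulting equality $tr(F_\nabla)=tr(F_{\WT\nabla})$ holds at the level of forms, not just integrals, is a correct strengthening that the paper's argument also implicitly yields.
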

\begin{proof}
$$tr(A\wedge A)=\sum_{i,j}a_{ij}\wedge a_{ji}=\sum_i a_{ii}\wedge a_{ii}=0.$$
Second equality follows from cancelation of $a_{ij}\wedge a_{ji}$ with $a_{ji}\wedge a_{ij}=-a_{ij}\wedge a_{ji}$ for $i\neq j$.
\end{proof}

Now, we recall the standard relation between holonomy and curvature integral for line bundles.
Given a complex line bundle $\CL$ over a manifold $M$ and a connection $\nabla'$ on $\CL$,
the holonomy along a contractible loop $\gamma$ (which is bounded by the 2-dimensional contractible domain $D \subset M$) is given by
$$Hol_\gamma(\CL,\nabla') = exp( -\int_D F_{\nabla'}).$$

Note that if $\nabla'=d+A $ is a unitary connection on $D$, then $A$ satisfies $A =- \overline{A}^t$.
Therefore for a complex line bundle, $A$ is the purely imaginary connection 1-form.
 Since
$\xi(t)=e^{-\int_0^t A(\dot{\gamma}(s))ds}\xi(0)$ is a parallel transformation of $\xi(0)$,
the integral $- \frac{1}{\sqrt{-1}}\int_D F_{\nabla'} = \sqrt{-1}\int_D F_{\nabla'}$ gives the rotation angle of a parallel section along $\gamma$.
 Note that $\int_\gamma A=\int_D F_{\nabla'}$ by Stokes' theorem in this case.

In general, the above relation for $D$ extends to the case of Riemann surface $\Sigma$ with boundary $\partial \Sigma$. Namely, the integral $\sqrt{-1}\int_\Si F_{\nabla'}$ gives the sum of rotation angles of parallel sections along boundaries $\bS$ with the induced orientations from $\Si$.

Now we apply this to  $\Lambda^{n}E$ and $\WT\nabla$.
Note that, by the definition of $L$-orthogonal unitary connection, parallel transformation in $E$ preserves frame vectors of $L$. More precisely,  if $\{e_1(t),\cdots,e_n(t)\}$ is a horizontal sections of $E$ along $\gamma(t) \subset \bS$ and $\{e_1(t_0),\cdots,e_n(t_0)\}$ is a real orthogonal frame of $L_{t_0}$ at some moment $t_0$, then $\{e_1(t),\cdots,e_n(t)\}$ would be a real orthogonal frame of $L_t$ for all $t$.

Define a matrix $u(t):=(e_1(t),\cdots,e_n(t))\in U(n)$ using the frame as column vectors.
Then, we have (for the standard frame $\{\epsilon_1,\cdots,\epsilon_n\}$)
 $$e_1(t)\wedge\cdots\wedge e_n(t)=det(u(t))\epsilon_1\wedge\cdots\wedge\epsilon_n.$$

  So $det(u(t))$ is a  frame of the Lagrangian subbundle $\Lambda^n L\subset \Lambda^n E$.
   In the trivialization $det(\Phi):\Lambda^n E\to\Si\times\C$, we have $det(u(t)) \in U(1)$.

Observe that the $det(u(t))$ gives a horizontal section.
Hence the $\sqrt{-1}\int_\Si F_{\WT\nabla}$ measures the rotating angle of $det(u(t))$ in $U(1)$.

 As topological Maslov index $\mu$ corresponds to the rotation number of $det^2(u(t))$ in $U(1)$,  hence it is equal
 to $$\frac{\sqrt{-1}}{\pi}\int_\Si F_{\WT\nabla}.$$
\end{proof}

\subsection{Second proof of $\mu = \mu_{CW}$}
 We  will use the doubling construction (and the equivalence between the topological and Chern-Weil definition of the first Chern class).

To explain the doubling construction, we
recall the following well-known theorem (see \cite{AG}).
\begin{theorem}\label{doublesurface}
    Let $\Sigma$ be a bordered Riemann surface. There exists a double cover
    $\pi:  \Sigma_\C\rightarrow \Sigma$ of $\Sigma$ by a compact Riemann surface
    $\Sigma_\C$ and an antiholomorphic involution $\sigma:\Sigma_\C\rightarrow\Sigma_\C$
    such that $\pi\circ \sigma=\pi$. There is a holomorphic embedding
    $i: \Sigma\rightarrow \Sigma_\C$ such that $\pi\circ i$ is the identity map.
    The triple $(\Sigma_\C,\pi,\sigma)$ is unique up to isomorphism.
   \end{theorem}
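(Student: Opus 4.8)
The plan is to construct $\Sigma_\C$ as the \emph{Schottky double} of $\Sigma$: glue $\Sigma$ to a conjugate copy of itself along the common boundary. Let $\bar{\Sigma}$ denote the same underlying smooth surface carrying the opposite complex structure (its atlas consisting of the conjugate charts $\bar\phi$ for each chart $\phi$ of $\Sigma$). As a topological space I would set $\Sigma_\C := (\Sigma \sqcup \bar\Sigma)/\!\sim$, identifying each $p \in \partial\Sigma$ with its counterpart in $\partial\bar\Sigma$; this is a closed surface. The involution $\sigma$ that swaps the two copies and fixes $\partial\Sigma$ pointwise is then continuous, the fold map $\pi$ (the identity on $\Sigma$, the gluing identification on $\bar\Sigma$) satisfies $\pi\circ\sigma=\pi$ tautologically, and the inclusion $i$ of the first copy gives $\pi\circ i=\mathrm{id}_\Sigma$. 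All of these structural identities are immediate from the set-level definition.

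The substantive step is to equip $\Sigma_\C$ with a complex structure making $i$ holomorphic and $\sigma$ antiholomorphic. Away from the seam $\partial\Sigma$ the interior charts of $\Sigma$ and of $\bar\Sigma$ already supply a holomorphic atlas. Near the seam I would use that a bordered Riemann surface admits boundary charts $\phi\colon U \to \{\mathrm{Im}\,z \ge 0\}$ sending $U\cap\partial\Sigma$ into $\R$; from each such chart I build a chart of $\Sigma_\C$ on $U\cup\sigma(U)$ that is $\phi$ on the $\Sigma$-side and the conjugate $\overline{\phi\circ\sigma}$ on the $\bar\Sigma$-side, giving a homeomorphism onto a full disc in which the seam becomes the real axis and $\sigma$ becomes $z\mapsto\bar z$.

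The main obstacle is verifying that the transitions of this atlas are holomorphic across the seam. For two overlapping boundary charts the transition $\psi\circ\phi^{-1}$ is holomorphic on an upper half-disc and maps a real segment to $\R$, so by the \textbf{Schwarz reflection principle} it extends holomorphically to a full neighborhood; this extension is exactly the transition of the doubled charts across $\R$, which is what must be checked. Once holomorphicity of all transitions is in hand, $\Sigma_\C$ is a Riemann surface, compact since $\Sigma$ is; the map $i$ is holomorphic by construction; and $\sigma$, being $z\mapsto\bar z$ in each doubled boundary chart, is antiholomorphic.

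For uniqueness I would argue that any such triple $(\Sigma_\C',\pi',\sigma')$ is determined by $\Sigma$ together with the reflection germ along $\partial\Sigma$. The embedding $i'$ identifies $\Sigma$ with a half of $\Sigma_\C'$, and antiholomorphicity of $\sigma'$ forces its germ along the seam to coincide, through Schwarz reflection, with that of $\sigma$. I would then define the candidate isomorphism to be $i'$ on $i(\Sigma)$ and $\sigma'\circ i'\circ\sigma$ on the conjugate copy, and check that the two formulas agree along $\partial\Sigma$ and glue to a biholomorphism intertwining the projections and involutions; the uniqueness of the reflection extension guarantees the glued map is holomorphic across the seam.
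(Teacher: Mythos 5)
The paper does not actually prove this statement: it is quoted verbatim as a ``well-known theorem'' with a citation to Alling--Greenleaf \cite{AG}, so there is no in-paper argument to compare against. Your proposal is the standard Schottky-double construction that the cited reference uses, and it is correct: the gluing, the identities $\pi\circ\sigma=\pi$ and $\pi\circ i=\mathrm{id}$, the doubled boundary charts, and the appeal to the Schwarz reflection principle for holomorphicity of the transition maps across the seam are exactly the right ingredients, and your uniqueness argument via the reflected germ along $\partial\Sigma$ is sound (the only point worth making explicit is that the glued biholomorphism, being continuous on $\Sigma_\C$ and holomorphic off the real-analytic seam, is holomorphic everywhere by Morera or by the reflection principle itself). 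In short, the proof is correct and is essentially the canonical one; the paper simply omits it.
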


    \begin{definition}
    We call the triple $(\Sigma_\C,\pi,\sigma)$ in  Theorem \ref{doublesurface} the
    \emph{complex double} of $\Sigma$, and $\OL{\Sigma}=\sigma(i(\Sigma))$
    the \emph{complex conjugate} of $\Sigma$.
   \end{definition}

 We recall the following theorem 3.3.8 of \cite{KL} and its proof for reader's convenience.

 \begin{theorem}
  \label{degree}
   Let $(E,L)$ be a bundle pair over a bordered Riemann surface
   $\Sigma$.
   Then there is a complex vector bundle $E_\C$ on $\Sigma_\C$ together
   with a conjugate linear involution $\tilde{\sigma}: E_\C\rightarrow E_\C$
   covering the antiholomorphic involution $\sigma: \Sigma_\C\rightarrow\Sigma_\C$
   such that $E_\C|_\Sigma=E$ (where $\Sigma$ is identified with its image
   under $i$ in $\Sigma_\C$) and the fixed locus of $\tilde{\sigma}$
   is $L \rightarrow \bS$. Moreover, we have
\[
   \deg E_\C=\mu(E,L).
\]
  \end{theorem}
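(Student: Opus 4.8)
The plan is to construct $E_\C$ by gluing the given bundle $E$ over $i(\Si) \subset \Si_\C$ to a conjugate copy over $\OL{\Si} = \sigma(i(\Si))$, where the two halves are identified along the common boundary using the Lagrangian data $L$. Concretely, I would use the involution $\sigma$ to pull back $E$ and declare the bundle over the conjugate half to be the conjugate bundle $\OL{E}$; the fiberwise conjugation then furnishes the conjugate-linear involution $\tilde\sigma$. The clutching along $\bS$ must be arranged so that the fixed locus of $\tilde\sigma$ is exactly $L$: over a boundary point, a vector in $E$ is fixed by $\tilde\sigma$ precisely when it is real with respect to the splitting $E = L \oplus JL$ (the complexification of $L$), so the gluing identification should send a real frame of $L$ to itself. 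This is the content that makes $\tilde\sigma$ an honest conjugate-linear involution covering $\sigma$ with the prescribed fixed locus.

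The degree formula is where I would invoke the Chern-Weil machinery from the earlier sections. First I would equip $E$ with an $L$-orthogonal unitary connection $\nabla$, as in the definition of $\mu_{CW}$. The key geometric observation is that the $L$-orthogonality condition is exactly what allows $\nabla$ to be glued with its $\tilde\sigma$-conjugate to produce a global connection $\nabla_\C$ on $E_\C$ over the \emph{closed} surface $\Si_\C$: because parallel transport along $\bS$ preserves the real frame of $L$, the connection matches continuously across the gluing locus, and the conjugation symmetry ensures the two halves are compatible. Then I would compute
$$
\deg E_\C = c_1(\Lambda^n E_\C)([\Si_\C]) = \frac{\sqrt{-1}}{2\pi}\int_{\Si_\C} tr(F_{\nabla_\C}),
$$
using the Chern-Weil formula \eqref{chernintegral} for the closed surface $\Si_\C$.

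Next I would split the integral over $\Si_\C$ into the two halves $i(\Si)$ and $\OL{\Si}$. By the symmetry of the construction, the integral of $tr(F_{\nabla_\C})$ over $\OL{\Si}$ equals the integral over $i(\Si) = \Si$; the conjugate-linear nature of $\tilde\sigma$ and the orientation-reversing nature of $\sigma$ should conspire so that the two contributions are equal rather than cancelling. This gives
$$
\deg E_\C = \frac{\sqrt{-1}}{2\pi}\cdot 2 \int_{\Si} tr(F_\nabla) = \frac{\sqrt{-1}}{\pi}\int_{\Si} tr(F_\nabla) = \mu_{CW}(E,L),
$$
and combining with Theorem \ref{thm:main} yields $\deg E_\C = \mu(E,L)$.

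The main obstacle, I expect, will be verifying the smoothness of the glued connection $\nabla_\C$ across the seam $\bS$, and getting the bookkeeping of signs and orientations exactly right in the doubling of the integral. One must be careful that the $L$-orthogonality gives $C^1$-matching (not merely continuity) of $\nabla_\C$, or else replace $\nabla$ near $\bS$ by a product-form connection as noted in the remark following Example \ref{ex1} — the remark guarantees this change does not affect the curvature integral. Handling the factor of two correctly, by checking that $\sigma^* tr(F_{\nabla_\C})$ restricted to $\OL\Si$ pulls back to $tr(F_\nabla)$ on $\Si$ with the correct sign, is the delicate computational heart of the argument.
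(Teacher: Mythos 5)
Your proposal is correct in outline, but it proves the degree formula by a genuinely different route than the paper. The paper's proof is purely topological: it constructs $E_\C$ explicitly by clutching two \emph{trivial} bundles $U_1\times\C^n$ and $U_2\times\C^n$ along a tubular neighborhood of $\bS$ using the transition map $B_i^{-1}$, where $B_i = \Phi(L|_{R_i})$ under the identification $\Lambda(n)\cong\tilde{\mathcal U}(n)$, and then invokes the standard fact (proof of Theorem 2.69 in \cite{MS}) that $\deg E_\C$ equals the winding number of the overlap map between the trivializations over $\Si$ and $\OL\Si$ --- which is exactly $B_i$, whose winding number is by definition $\mu(E,L)$. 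No connection or curvature enters. You instead double an $L$-orthogonal connection to $\nabla_\C$, compute $\deg E_\C = \frac{\I}{2\pi}\int_{\Si_\C}tr(F_{\nabla_\C}) = \frac{\I}{\pi}\int_\Si tr(F_\nabla) = \mu_{CW}(E,L)$, and then apply Theorem \ref{thm:main}. This works, but two caveats. First, beware of circularity: the paper's \emph{second} proof of Theorem \ref{thm:main} uses Theorem \ref{degree} as its main input, so your argument is only non-circular because the first proof of Theorem \ref{thm:main} (via the determinant line bundle) is independent of the doubling construction; you should cite that one explicitly. Second, the step you defer as ``the delicate computational heart'' --- that the $L$-orthogonal, product-form connection actually glues across the seam under the transition map $B_i^{-1}$, i.e.\ that $B^{-1}dB + B^{-1}AB = \OL A$ on $N$ --- is precisely the content of the paper's Proposition-Definition \ref{pro-def:double} and is a genuine computation, not a formality; likewise the factor of two uses that $tr(F_\nabla)$ is purely imaginary (unitarity) so that conjugation and orientation reversal each contribute a sign. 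The trade-off: the paper's topological argument is shorter and self-contained, and is needed as independent input for the second proof of $\mu=\mu_{CW}$; your Chern-Weil argument is heavier here but is essentially the mechanism that lets the paper later extend the degree/Maslov relation to orbifold doubles, where trivializations are unavailable.
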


\begin{proof}
   Let $R_1,\ldots,R_h$ be the connected components
   of $\bS$, and let $N_i\cong R_i\times [0,1)$ be a neighborhood of
   $R_i$ in $\Si$ such that $N_1,\ldots,N_h$ are disjoint. Then
   $(N_i)_\C=N_i\cup \overline{N}_i$ is a tubular neighborhood of $R_i$ in
   $\Si_\C$, and $N\equiv\cup_{i=1}^h (N_i)_\C$ is a tubular
   neighborhood of $\bS$ in $\Si_\C$. Let $U_1=\Si\cup N$,
   $U_2=\OL{\Si}\cup N$, so that $U_1\cup U_2=\Si_\C$ and
   $U_1\cap U_2=N$.

   Fix a trivialization $\Phi: E\cong\Si\times\C^n$, where $n$ is
   the rank of E. Then $\Phi(L|_{R_i})$ gives rise to a loop
   $B_i: R_i\rightarrow \widetilde{\mathcal{R}}_n\subset GL(n,\C)$.
   To construct $E_\C\rightarrow \Si_\C$, we glue trivial
   bundles $U_1\times \C^n\rightarrow U_1$ and
   $U_2\times \C^n\rightarrow U_2$ along $N$ by identifying
   $(x,u)\in (N_i)_\C\times \C^n\subset U_1\times \C^n$
   with $(x,B_i^{-1}\circ p_i(x)u)\in(N_i)_\C\times
   \C^n\subset U_2\times \C^n$,
   where $p_i: (N_i)_\C\cong R_i\times(-1,1)\rightarrow R_i$ is the
   projection to the first factor and
   $B_i^{-1}: R_i\to\widetilde{\mathcal{R}}_n$ denotes the map $B_i^{-1}(x)=
   (B_i(x))^{-1}$. There is a conjugate linear
   involution $\tilde{\sigma}: E_\C\rightarrow E_\C$ given by
   $(x,u)\in U_1\times\C^n\mapsto (\sigma(x),\bar{u})\in U_2\times\C^n$
   and
   $(y,v)\in U_2\times\C^n\mapsto (\sigma(y),\bar{v})\in U_1\times\C^n$.
   It is clear from the above construction that
    $\tilde{\sigma}: E_\C\rightarrow E_\C$ covers the antiholomorphic
   involution $\sigma: \Si_\C\rightarrow\Si_\C$, and the fixed locus of
   $\tilde{\sigma}$ is $L\rightarrow \bS$.

 Proof of \cite{MS} Theorem 2.69 shows that $\deg(E_\C)$, which is
 $(c_1(E_\C) \cap \Sigma_\C)$ can be defined by winding number (degree) of  the overlap map from trivializations of $E_\C$ over $\OL{\Sigma}$ to that of $\Sigma$. But in our setting, this map is given by $B_i$ whose winding number defines the Maslov index $\mu(E,L)$.
This proves the desired identity.

\end{proof}

Now, given an orthogonal connection $\nabla$ on a bundle pair $(E,L)$, let us assume that it has a product form near the boundary.
More precisely, on the normal neighborhood $N:=\bS\times[0,1)$, we have $\nabla|_{\bS\times[0,\epsilon)}=\pi^*(\nabla|_{\bS})$ where $\pi:\bS\times[0,\epsilon)\to\bS$ is the projection map.
It is easy to see that such an orthogonal connection always exists.

We construct
a connection $\nabla_\C$ on the complex double $E_\C$ from the orthogonal connection $\nabla$ on a bundle pair $(E,L)$.
Fix a trivialization $\Phi: E\cong\Si\times\C^n$ and let $\{\epsilon_1,\cdots,\epsilon_n\}$ be the frame of $E$, where $\epsilon_j=\PD{x_j}$, with $x_j+i\cdot y_j$ the $j$-th coordinate of $\C^n$. By deforming the frame near the boundaries, we may assume that $\epsilon_j|_{\bS\times[0,\epsilon)}=\pi^*(\epsilon_j|_{\bS})$. Then $\nabla=d+A$ where $A$ is an $End(E)$-valued 1-form on $\Si$, which is defined by
\begin{equation}\label{eq:A}
\nabla\epsilon_i(z)=\sum_i (A)_{ji}(z)\cdot\epsilon_j(z)
\end{equation}
for the standard hermitian metric $h$. Define a connection on $\OL\Si\times\C^n\to\OL\Si$ by $\OL\nabla:=d+\OL{A}$, i.e.,
\begin{equation}
(\OL{A})_{ij}(z):=\OL{(A)_{ij}(\sigma(z))}
\end{equation}
where $\sigma:\Sigma_\C\rightarrow\Sigma_\C$ is the involution map.
\begin{proposition-definition}\label{pro-def:double}
We define a connection $\nabla_\C$ on $E_\C$ which restrict to
$\nabla_\C|_\Si\equiv\nabla$ and $\nabla_\C|_{\OL\Si}\equiv\OL\nabla$. Namely, $A$ is compatible to $\OL{A}$ on the tubular neighborhood $N$ of $\bS$.
\end{proposition-definition}
\begin{proof}
On $\bS$, we fix a starting point $z\in R_i\subset\bS$ and parameterize $R_i$ by $\gamma:[0,1]\to R_i$ with $\gamma(0)=\gamma(1)=z$. Image of Lagrangian subbundle $L_{\gamma(t)}$ under the map $\Phi$ can be written as $u(t)\cdot\R^n\subset\C^n$. In fact, we choose $u(t)$ as follows:
Consider $u(0)$ with its column vectors $(e_1(0),\cdots,e_n(0))$. Using $\nabla$ on $(E,L)$, denote parallel transport of $e_j(0)$ at $\gamma(t)$ by $e_j(t)$. We have $\nabla e_j(t)=0$ on $R_i$.  And $u(t)$ with its column vectors $(e_1(t),\cdots,e_n(t))$, is a unitary matrix with $\Phi(L_{\gamma(t)})=u(t)\cdot\R^n$. Denote entries of $u(t)$ as $(e_{ij}(t))$.
Since $u(t)u(t)^*=I$,
\begin{equation}
\epsilon_i(t)=\OL{e_{i1}(t)}e_1(t)+\cdots+\OL{e_{in}(t)}e_n(t).
\end{equation}
\begin{align}
\nabla\epsilon_i(t)&=\nabla(\sum_{j=1}^n\OL{e_{ij}(t)}e_j(t))\\
    &=\sum_j d\OL{e_{ij}}\otimes e_j+\OL{e_{ij}}\nabla e_j\\
    &=\sum_j d\OL{e_{ij}}\otimes e_j
\end{align}
From the equation (\ref{eq:A}) on $\Si$, if we represent $A$ with respect to the frame $\{\epsilon_1,\cdots,\epsilon_n\}$, the i-th column of $A(t)$ is equal to $\nabla\epsilon_i(t)$ and it is a linear combination of $e_j$'s with coefficients $(d\OL{e_{i1}},\cdots,d\OL{e_{in}})$ by last equation. Since this coefficient vector is i-th row of $d\OL{u(t)}$, we have
$$A(t)=u(t)\cdot\frac{\partial{\OL{u(t)}^T}}{\partial{t}}dt, \; \textrm{on} \; \gamma(t)\in\bS.$$ Note that since $\nabla$ is a product form near the boundary, $A(t,r)=A(t)$ on $N$, where $r$ is the normal coordinate, i.e., $(t,r)\in\bS\times[0,\epsilon)=N$.
Recall that, in the construction of $E_\C$, the transition map was given by the inverse of $B_i$. Note that $B_i(\gamma(t))=u(t)\cdot u(t)^T$.
Hence, under the transition map, the connection 1-form $A$ is transformed to
\begin{align}
\tilde{A(t,r)}&:=B(t)^{-1}\cdot dB(t)+B(t)^{-1}\cdot A(t)\cdot B(t)\\
    &=\OL{u(t)}\cdot\OL{u(t)}^T d(u(t)\cdot u(t)^T)+\OL{u(t)}\cdot\OL{u(t)}^T\cdot A(t)\cdot u(t)\cdot u(t)^T\\
    &=[\OL{u(t)}\cdot\OL{u(t)}^T\frac{\partial{u(t)}}{\partial{t}}u(t)^T
        +\OL{u(t)}\cdot\frac{\partial{u(t)^T}}{\partial{t}}
        +\OL{u(t)}\frac{\partial{\OL{u(t)}^T}}{\partial{t}}u(t)u(t)^T]dt\\
    &=[\OL{u(t)}\PD{t}\{\OL{u(t)}^T u(t)\}u(t)^T
        +\OL{u(t)}\cdot\frac{\partial{u(t)^T}}{\partial{t}}]dt\\
    &=\OL{u(t)}\cdot\frac{\partial{u(t)^T}}{\partial{t}}dt\\
    &=\OL{A(t,r)}
\end{align}
Hence the connection $\nabla$ and $\OL\nabla$ can be pasted near $\bS$.
\end{proof}

Now we start the second proof of Theorem \ref{thm:main}.
\begin{proof}
We first consider the case that orthogonal connection $\nabla$ is of product form near the
boundary. Later,  we prove that $\mu_{CW}$ is independent of the choice of orthogonal connection.

Note that $F_{\OL\nabla}\circ\sigma=d\OL{A}+\OL{A}\wedge\OL{A}=\OL{dA+A\wedge A}=\OL{F_\nabla}$.
Since $\nabla$ is unitary connection, $tr(F_\nabla)$ is purely imaginary.
Hence
\begin{equation}
tr(F_\nabla)+tr(F_{\OL\nabla}\circ\sigma)=tr(F_\nabla)+tr(\OL{F_\nabla})=0.
\end{equation}
Recall that the holomorphic structure of $\OL\Si$ is given by antiholomorphic structure of $\Si$, hence the orientation of $\OL\Si$ is reversed to the one of $\Si$.
\begin{equation}
\int_\Si tr(F_\nabla)=-\int_\Si tr(F_{\OL\nabla}\circ\sigma)=\int_{\OL\Si} tr(F_{\OL\nabla}).
\end{equation}
Since the first Chern number of doubling $E_\C$ gives the Maslov index of $(E,L)$,
\begin{equation}
\frac{\I}{\pi} \int_{\Si}tr(F_{\nabla}) =  \frac{\I}{2\pi}\int_{\Si_\C}tr(F_{\nabla_\C})=c_1(E_\C)([\Si_\C]) = \mu(E,L).
\end{equation}
Therefore we can conclude that
$$\mu_{CW}(E,L)=\frac{\I}{\pi}\int_\Si{tr(F_\nabla)} = \mu(E,L),$$ for a connection which is a product form near the boundary.
This proves the theorem \ref{thm:main} together with the following lemma proposition which
claims that $\mu_{CW}(E,L)$ is the same for any $L$-orthogonal unitary connection.
\end{proof}

\section{Properties of Chern-Weil Maslov index}
In this section, we prove several properties of $\mu_{CW}$.
We  prove that $\mu_{CW}$ is independent  of the choices of orthogonal connection and of
compatible   complex structures. Although this follows from the equivalence which is proved in the previous section, but the proofs given here will naturally extend to the case of orbifolds.

We also give a couple of examples to demonstrate that  it is important to have unitary condition
in the definition of orthogonal connection at the end of the section.
\subsection{Independence of $\mu_{CW}$}
\begin{proposition}\label{lem:stoke}
$\int_\Si tr(F_\nabla)$ is independent of the choice of an $L$-orthogonal unitary connection $\nabla$.
\end{proposition}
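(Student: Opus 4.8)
The plan is to show that if $\nabla$ and $\nabla'$ are two $L$-orthogonal unitary connections on the bundle pair $(E,L)$, then $\int_\Si \operatorname{tr}(F_\nabla) = \int_\Si \operatorname{tr}(F_{\nabla'})$. The natural strategy mirrors the closed-surface argument recalled in the first theorem of Section 2: write $\nabla = \nabla' + \eta$ for an $\operatorname{End}(E)$-valued $1$-form $\eta$, so that $F_\nabla = F_{\nabla'} + d_{\nabla'}\eta$, and after taking traces the quadratic terms drop out (exactly as in the Lemma of subsection 3.1, where $\operatorname{tr}(\eta\wedge\eta)=0$). Thus $\operatorname{tr}(F_\nabla) - \operatorname{tr}(F_{\nabla'}) = d\big(\operatorname{tr}(\eta)\big)$, and by Stokes' theorem
\begin{equation}
\int_\Si \operatorname{tr}(F_\nabla) - \int_\Si \operatorname{tr}(F_{\nabla'}) = \int_\Si d\big(\operatorname{tr}(\eta)\big) = \int_{\bS} \operatorname{tr}(\eta).
\end{equation}
So everything reduces to showing that the boundary integral $\int_{\bS}\operatorname{tr}(\eta)$ vanishes.

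First I would fix, along each boundary component $R_i$, a local real orthonormal frame $\{e_1(t),\dots,e_n(t)\}$ of the Lagrangian subbundle $L \to \bS$; since $J$ is compatible, this is simultaneously a unitary (complex) frame of $E|_{\bS}$. The key point is that both connections are $L$-orthogonal, so parallel transport under either $\nabla$ or $\nabla'$ along $\bS$ preserves $L$. Expressed in the chosen frame, this forces the connection matrices of $\nabla$ and $\nabla'$, restricted to $\bS$ and applied to the tangent direction of $\bS$, to be \emph{real} skew-symmetric: unitarity gives skew-Hermitian $A = -\OL{A}^t$, while preservation of the real Lagrangian frame forces the entries to be real, hence $A$ is real skew-symmetric and in particular has vanishing diagonal. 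Consequently $\operatorname{tr}(A)|_{\bS}(\dot\gamma) = 0$ for both connections, so the difference $\eta$ restricted to $\bS$ also has $\operatorname{tr}(\eta)(\dot\gamma) = 0$ along the tangential direction.

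The only subtlety is that $\operatorname{tr}(\eta)$ is a $1$-form on $\Si$ and when we integrate over $\bS$ we only pick up its tangential component, which is precisely $\operatorname{tr}(\eta)(\dot\gamma)\,dt$; the argument above shows this is zero pointwise on $\bS$, so $\int_{\bS}\operatorname{tr}(\eta) = 0$ and the two curvature integrals agree. I expect the main obstacle to be the careful justification that $L$-orthogonality plus unitarity really forces the tangential connection form to be \emph{real} skew-symmetric (not merely skew-Hermitian) in a Lagrangian frame, since it is this reality — giving a vanishing trace — rather than skew-Hermiticity alone that kills the boundary term; one must check that a real orthonormal frame of $L$ is genuinely an admissible unitary frame of $E$ and that $\nabla_{\dot\gamma}$ stays within the real span. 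This is also the step that will need to be phrased invariantly enough to carry over to the orbifold setting, as the remark preceding the proposition anticipates.
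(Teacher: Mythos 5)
Your proposal is correct and follows essentially the same route as the paper: the same decomposition $\nabla-\nabla'=\eta$, the same reduction via Stokes to $\int_{\bS}\operatorname{tr}(\eta)=0$, and the same key observation that $L$-orthogonality makes the tangential connection matrices real in a real orthonormal frame of $L$. The only (harmless) difference is in the last step: you combine reality with skew-Hermiticity to get pointwise vanishing of the tangential trace, whereas the paper combines reality of $\int_{\bS}\operatorname{tr}(\eta)$ with the fact that this integral equals a difference of purely imaginary curvature integrals.
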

\begin{proof}
Let $\nabla^1$ and $\nabla^2$ are $L$-orthogonal unitary connections. Then $\nabla^1-\nabla^2=A$ for some $A\in\Omega^1(\Si)\otimes End(E)$, and we have
 $$tr(F_{\nabla^1})-tr(F_{\nabla^2})=d(tr(A)).$$
To prove the lemma, it is enough to show that $\int_\Si d(tr(A))=\int_{\bS} tr(A)=0$.
After fixing a compatible complex structure $J$ and a trivialization $\Phi:E\to\Si\times\C^n$, let $\{e_1,\cdots,e_n\}$ be an real orthonormal frame of $L$. Then
$$(\nabla^1-\nabla^2)_{\PD{t}}e_i=\sum_j A_{ji}(\PD{t})e_j$$
where $t$ is a local coordinate of $\bS$.
Note that $A_{ji}(\PD{t})$s are real-valued functions over $\bS$ since $\nabla^k (k=1,2)$ preserves $L$. Hence $\int_{\bS} tr(A)$ is real-valued and it vanishes since $tr(F_{\nabla^k})$ are imaginary.

\end{proof}

One can define the notion of isomorphism between two bundle pairs over $\Sigma$, and
it is easy to show that isomorphic bundle pairs have the same Maslov index.
If the bundle pair is defined from a smooth map $u:(\Sigma,\partial \Sigma) \to (M,L)$ to
symplectic manifolds via pulling back tangent bundles, then homotopic maps
define isomorphic bundle pairs, hence has the same Maslov index.

The following corollary also follows from the equivalence $\mu = \mu_{CW}$, but
we give a direct proof.
\begin{corollary}
The Maslov index $\mu_{CW}(E,L)$ of a bundle pair $(E,L)$ does not
depend on a compatible   complex structure $J$.
\end{corollary}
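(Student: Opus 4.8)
The plan is to show that the curvature integral $\frac{\I}{\pi}\int_\Si tr(F_\nabla)$ produces the same value regardless of which compatible complex structure $J$ we use to build the $L$-orthogonal unitary connection. The natural strategy is a homotopy argument in the space of compatible complex structures, combined with the independence result already established in Proposition \ref{lem:stoke}. First I would recall the standard fact that the space $\CJ(E,\omega)$ of $\omega$-compatible complex structures on the symplectic vector bundle $E$ is contractible, and in particular path-connected. Given two compatible complex structures $J_0$ and $J_1$, each with its own $L$-orthogonal unitary connection $\nabla^0$ (for $(E,J_0)$) and $\nabla^1$ (for $(E,J_1)$), the goal is to connect the two curvature integrals.

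The key step is to connect the two setups by a single object to which Proposition \ref{lem:stoke} applies. I would argue that for a fixed complex structure $J$, the integral is connection-independent by Proposition \ref{lem:stoke}, so the only thing that can vary is the dependence on $J$ itself. To handle the $J$-dependence, I would choose a smooth path $J_s$, $s\in[0,1]$, of compatible complex structures joining $J_0$ to $J_1$, and for each $s$ select an $L$-orthogonal unitary connection $\nabla^s$ depending smoothly on $s$ (this is possible because the construction of an $L$-orthogonal connection sketched after the definition in section 2 is a smooth, fiberwise linear-algebraic procedure that can be carried out in families). The quantity $f(s):=\frac{\I}{\pi}\int_\Si tr(F_{\nabla^s})$ is then a continuous, indeed smooth, function of $s$. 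On the other hand, by the equivalence $\mu=\mu_{CW}$ — or more self-containedly by observing that $tr(F_{\nabla^s})$ represents, up to the boundary term, a topological/integer-valued quantity — $f(s)$ takes values in a discrete set, hence is constant. Therefore $f(0)=f(1)$, which is the desired independence. To keep the argument genuinely direct (as the corollary requests) rather than circular through the main theorem, I would instead prefer to compute $\frac{d}{ds}f(s)$ and show it vanishes.

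For the direct computation I would write $\dot A_s := \frac{d}{ds}\nabla^s \in \Omega^1(\Si)\otimes End(E)$ and use the standard variational formula $\frac{d}{ds}F_{\nabla^s} = d^{\nabla^s}\dot A_s$ for the derivative of curvature along a path of connections. Taking trace and using the Bianchi-type identity $tr(d^{\nabla}\dot A) = d(tr(\dot A))$, I obtain $\frac{d}{ds}tr(F_{\nabla^s}) = d\big(tr(\dot A_s)\big)$. By Stokes' theorem,
\[
\frac{d}{ds}\int_\Si tr(F_{\nabla^s}) = \int_\Si d\big(tr(\dot A_s)\big) = \int_{\bS} tr(\dot A_s),
\]
so everything reduces to showing that the boundary integral $\int_{\bS}tr(\dot A_s)$ vanishes. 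This is exactly the same type of boundary computation carried out in the proof of Proposition \ref{lem:stoke}: along $\bS$ each $\nabla^s$ is $L$-orthogonal, so differentiating the orthogonality condition (the parallel transport preserves the real Lagrangian frame of $L$) in $s$ shows that $\dot A_s$ restricted to $\bS$ is represented, with respect to a real orthonormal frame of $L$, by a real-valued matrix one-form, forcing $tr(\dot A_s)|_{\bS}$ to be real; but since $tr(F_{\nabla^s})$ is purely imaginary (unitarity) its $s$-derivative is purely imaginary too, and a real quantity equal to an imaginary one must vanish.

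The main obstacle I anticipate is the compatibility between the varying complex structure and the $L$-orthogonality condition along the boundary: when $J_s$ changes, the notion of ``unitary'' and the splitting used to define the $L$-orthogonal connection both change, so I must make sure that the family $\nabla^s$ can be chosen to be $L$-orthogonal and unitary \emph{for each} $J_s$ simultaneously and smoothly in $s$. Concretely, the subtlety is that a real orthonormal frame of $L$ that is unitary for $J_0$ need not be unitary for $J_1$, so the boundary computation must be phrased intrinsically in terms of the real subbundle $L$ rather than a fixed complex frame. I expect this to be manageable because $L$ and the Euclidean metric $g_s(\cdot,\cdot)=\omega(\cdot,J_s\cdot)$ on $E$ vary together, and the relevant reality of $tr(\dot A_s)|_{\bS}$ follows from differentiating the defining relation of $L$-orthogonality, but it is the place where care is genuinely required to avoid a gap.
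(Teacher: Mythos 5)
Your proof is correct and is essentially the paper's argument: both connect $J_0$ to $J_1$ by a path of compatible complex structures and kill the resulting boundary contribution by the same real-versus-imaginary trick from Proposition \ref{lem:stoke} (the term is real because the connections preserve the real subbundle $L$, and imaginary by unitarity). The only cosmetic difference is packaging: the paper takes a single $(L\times I)$-orthogonal connection on $E\times I\to\Sigma\times I$ and applies Stokes' theorem on the three-manifold $\Sigma\times I$, whereas you differentiate a smooth family $\nabla^s$ under the integral sign; the two reduce to the identical boundary computation on $\partial\Sigma\times I$.
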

\begin{proof}
Suppose we have a symplectic vector bundle $E \to \Sigma$ and
Lagrangian subbundle $L \to \partial \Sigma$. Let $J_0$, $J_1$ be two
compatible   complex structures of $E$.
It is well-known that the set of compatible complex structures are
path connected, and take $J_t$ connecting $J_0$ and $J_1$.

Now, consider a symplectic bundle $\WT{E} = E \times I \to \Sigma \times I$ and
Lagrangian subbundle $L \times I  \to \partial \Sigma \times I$.

We can choose an $(L\times I)$-orthogonal unitary connection $\WT{\nabla}$ on the complex vector bundle $\WT{E}$ with complex structure $\{J_t\}$.
Here, $(L\times I)$-orthogonal unitary connection means a unitary connection which preserves $(L\times I)$-subbundle along $\bS\times I$.
Then $\WT{\nabla}$ restricts to an $(L \times \{t\})$-orthogonal unitary connection $\WT{\nabla}_t$ on
$$\WT{E}_t = E \times \{t\} \to \Sigma \times \{t\}.$$


It is enough to show that
$$\int_{\Sigma} tr(F_{\WT{\nabla}_0}) = \int_{\Sigma} tr(F_{\WT{\nabla}_1}).$$
As we have
\begin{equation}
0=\int_{\Si\times I}tr(dF_{\WT{\nabla}})=\int_{\Si\times\{1\}}tr(F_{\WT{\nabla}_0})-\int_{\Si\times\{0\}}tr(F_{\WT{\nabla}_1})+\int_{\partial{\Si}\times I}tr(F_{\WT{\nabla}})
\end{equation}
it is enough to show that imaginary part of $\int_{\partial{\Si}\times I}tr(F_{\WT{\nabla}})$ vanishes.

Note that over $\partial{\Si}\times I$, the bundle
$E \times I$ with $\{J_t\}$ is isomorphic to the complexification of real bundle
$L \times I$. Using a similar argument in the Proposition \ref{lem:stoke}, it is easy to show that $tr(F_{\WT{\nabla}})|_{\bS\times I}$ is indeed real-valued.
This proves the corollary.
\end{proof}

\subsection{On the unitary condition.}
Note that the property of the $L$-orthogonal unitary $\nabla$ which preserving the hermitian product along the $\bS$ is important in the above proof. The hermitian property guarantee that the induced connection $\WT\nabla$ on the determinant line bundle $det(E)|_{\bS}$ can be identified with one of $U(1)$-principal line bundle over $\bS$. If we drop the such condition from connection and choose a connection which preserves only the Lagrangian subbundle data over $\bS$, the Chern-Weil definition of Maslov index fails. It is because the curvature integral captures not only the rotations of horizontal sections, but also the change of norm of them. See the following example.

\begin{example}
Consider a bundle pair $E := D^2 \times\C\to D^2$ with a trivial Lagrangian sunbundle $L:=\partial D^2 \times\R$. Define a connection $\nabla:=d+r d\theta$ with respect to the standard complex frame $\{\epsilon\}$ of $E$. Then $\nabla$ preserves the Lagrangian structure, since
\begin{align}
\nabla_\PD{\theta} f\epsilon=0 \Leftrightarrow f(\theta)=f(\theta_0)e^{\theta_0}e^{-\theta}.
\end{align}
Note that
\begin{align}
\int_{D^2} F_\nabla &= \int_{D^2}dr\wedge d\theta\\
&=2\pi
\end{align}
Note that the $2\pi$ measures the ratio of the change of the norm of parallel sections along $D^2$.
\end{example}

\section{The case with boundary condition on transversely intersecting Lagrangian submanifolds}
Recall that to define Fukaya category, $J$-holomorphic maps from holomorphic polygons with boundary on several Lagrangian submanifolds (which intersects transversely) are used. There exist a Maslov index attached to such a map, which determines the virtual dimension of the moduli spaces of such maps.
In general, one can consider maps from Riemann surfaces with boundary with  boundary condition on transversely intersecting Lagrangian submanifolds.
In this section, we give a Chern-Weil definition of the Maslov index of a bundle pair arising from such maps, and find a relation with the virtual dimension of the related moduli spaces.

For simplicity of exposition, we assume that Riemann surface $\Sigma$ has a boundary $\partial \Sigma$ which is connected. (In the general case, the same thing holds by  taking the Maslov index in the sense of Definition \ref{def:mas}.)
We consider marked points (or punctures) $v_0,\cdots,v_k \in \partial \Sigma$ placed in a cyclic order for the induced orientation of $\partial \Sigma$. Holomorphic polygons are genus 0 cases.

\subsection{Orthogonal connection on a bundle pair in transversal case}
Let $\Si$  be a Riemann surface with boundary with vertices labeled as $\{v_0,\cdots, v_k\}$ and
with $k+1$ edges  labeled as $\{l_0, l_1, \cdots, l_k\}$
such that  $v_i = l_i \cap l_{i+1}$ for $i =0,\cdots, k$
modulo $k+1$.
For each $i$, we fix a small closed
neighborhood $U_i$ of $v_i$ and a conformal isomorphism
$$U_i\backslash\{v_i\}\to(-\infty,0]\times[0,1].$$
Let $(M,\omega)$ be a symplectic manifold with a compatible  almost complex structure $J$
and $L_0,\cdots,L_k$ be Lagrangian submanifolds intersecting transversely in $M$. Suppose $p_i \in L_i \cap L_{i+1}$ for
$i=0,\cdots, k$ modulo $k+1$.

Let $u:\Si \to M$ be a $J$-holomorphic map with boundary condition $u(l_j) \subset L_j$ and
asymptotic condition $ lim_{z \to v_i} u(z) =p_i$.
By pulling back via $u$ the tangent bundles, we obtain the following notion of bundle pair.
\begin{definition}
We denote by a {\em bundle pair $(E, \textbf{L}) \to \Si$}, a symplectic  vector bundle $E$ over $\Si$
with Lagrangian subbundles $\textbf{L}:=\{L_0, L_1,\cdots,L_k\}$
 over the edges $\{l_0, l_1, \cdots, l_k\}$ of $\Si$ with  $J$ a compatible   complex structure of $E$ . At $v_i$, Lagrangian subbundles $L_i|_{v_i}$ and $L_{i+1}|_{v_i}$ are assumed to  intersect transversely.
\end{definition}

We give a definition of \textbf{L}-orthogonal unitary connection for the bundle pair $(E,\textbf{L})$ over $\Si$.
\begin{definition}
Let $(E,\textbf{L})$ be a bundle pair over $\Si$ as above with $J$.
A unitary connection $\nabla$ on $E$ is called \textbf{L}-orthogonal unitary connection of $(E,\textbf{L})$ if
the connection  $\nabla_{l_i}$ on $E|_{l_i}$, which is obtained by restriction, is $L_i$-orthogonal
on $l_i$, for each $i=0,1,\cdots,k$.
\end{definition}

\begin{lemma}
Given a bundle pair $(E,\textbf{L}) $ over $\Si$, an orthogonal connection exists.
\end{lemma}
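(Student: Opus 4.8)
The plan is to build the connection patch by patch and glue with a partition of unity, exploiting the fact that both the unitary condition and each $L_i$-orthogonality condition are convex. First I would recall that over a neighborhood of a single edge $l_i$ an $L_i$-orthogonal unitary connection exists, by exactly the construction used in the non-transverse case: choose a real orthonormal frame of $L_i$ along $l_i$, extend it to a unitary frame of $E$ by applying $J$, declare this frame parallel, and extend the resulting connection into the interior via a trivialization. This produces, for each $i$, an $L_i$-orthogonal unitary connection $\nabla^{(i)}$ defined near the interior of $l_i$.

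The key observation is convexity. In a frame adapted to $L_i$ along $l_i$ (so that $L_i=\mathrm{span}_\R(e_1,\dots,e_n)$), a unitary connection $\nabla=d+A$ is $L_i$-orthogonal along $l_i$ precisely when the tangential component $A(\partial_t)$ has real entries; together with skew-Hermiticity this says $A(\partial_t)\in\mathfrak{o}(n)=\mathfrak{u}(n)\cap\mathfrak{gl}(n,\R)$, a real-linear condition. Since the difference of two unitary connections is a $\mathfrak{u}(E)$-valued $1$-form, any convex combination $\sum_\alpha\rho_\alpha\nabla^{(\alpha)}$ (with $\sum_\alpha\rho_\alpha=1$) is again unitary, and along $l_i$ its tangential part equals $\sum_\alpha\rho_\alpha A^{(\alpha)}(\partial_t)$, which stays in $\mathfrak{o}(n)$ provided every connection active on $l_i$ is $L_i$-orthogonal. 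Thus it suffices to find a cover and local connections so that on each edge only $L_i$-orthogonal connections contribute.

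The main obstacle is the corners $v_i=l_i\cap l_{i+1}$, where the requirements from $L_i$ and $L_{i+1}$ are mutually incompatible (the two Lagrangians are transverse, so no single frame is adapted to both) and a naive cover by edge neighborhoods would force the combination of an $L_i$-orthogonal and an $L_{i+1}$-orthogonal connection on the overlap near $v_i$. I would resolve this using the conformal strip coordinate $U_i\setminus\{v_i\}\cong(-\infty,0]\times[0,1]$, in which the two edges appear as the disjoint boundary lines $\{y=0\}$ and $\{y=1\}$ carrying $L_i$ and $L_{i+1}$ respectively. Over this strip I would choose a smoothly varying unitary frame of $E$ interpolating in the $y$-direction between a real orthonormal frame of $L_i$ along $\{y=0\}$ and one of $L_{i+1}$ along $\{y=1\}$ (such a frame exists since $U(n)$ is connected and the strip is contractible, so the unitary frame bundle is trivial there), and declare this frame parallel. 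The resulting corner connection is unitary and restricts simultaneously to an $L_i$-orthogonal connection on $\{y=0\}$ and an $L_{i+1}$-orthogonal connection on $\{y=1\}$.

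Finally I would assemble the cover: a corner patch $C_i$ (the strip neighborhood) at each vertex, an edge patch $W_i$ over the interior of each edge, and an interior patch $W_\infty$ whose closure is disjoint from $\partial\Sigma$ and carries an arbitrary unitary connection. By construction each edge $l_i$ meets only $C_{i-1}$, $W_i$, and $C_i$, all three of which carry connections that are $L_i$-orthogonal along $l_i$, while $W_\infty$ never reaches the boundary. Taking a subordinate partition of unity and forming the convex combination then yields a unitary connection whose restriction to each $l_i$ is $L_i$-orthogonal, that is, an $\textbf{L}$-orthogonal unitary connection. The only points requiring care are the smooth dependence of the interpolating frame near the corners and the bookkeeping of the various adapted frames, both of which are routine.
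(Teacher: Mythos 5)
Your overall strategy is the same as the paper's: construct the connection locally and glue with a partition of unity, with the only nontrivial local problem being the corners $v_i$. Your explicit convexity argument (unitarity is an affine condition on $\nabla$, and $L_i$-orthogonality along $l_i$ is the real-linear condition $A(\partial_t)\in\mathfrak{o}(n)$ in a frame adapted to $L_i$) is the correct justification for the partition-of-unity step; the paper leaves this implicit, so making it explicit is a genuine improvement in exposition.

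The one place where you diverge from the paper, and where your argument has a real problem, is the corner model. You work on the strip $U_i\setminus\{v_i\}\cong(-\infty,0]\times[0,1]$ and declare parallel a unitary frame that interpolates in $y$ between a real frame of $L_i$ and a real frame of $L_{i+1}$. Because these two Lagrangians are transverse at $v_i$, the interpolating frame must rotate nontrivially in the $y$-direction all the way down the strip; under the conformal identification $y=\theta/(\pi/2)$ (up to normalization) the resulting connection $1$-form, expressed in a frame of $E$ that is smooth on the full neighborhood $U_i$, contains a term proportional to $d\theta\sim r^{-1}$ and therefore does not extend even continuously over the point $v_i$. Since $E$ and the connection are supposed to live on all of $\Sigma$ (and the later Stokes-type arguments integrate connection forms over $\partial\Sigma$ up to the corners), this is a gap rather than a cosmetic issue. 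The paper avoids it by exploiting that $L_i$-orthogonality only requires parallel transport to \emph{preserve the subbundle}, not to fix a chosen real frame of it: on the quarter-disc $Z^{o}$ one modifies a single unitary trivialization $\Phi$ (multiplying by $U(n)$-valued functions, e.g.\ $u(x)^{-1}u'(y)^{-1}$ with $u(0)=u'(0)=I$) so that $\Phi(L_{i+1})$ is literally constant along the real axis and $\Phi(L_i)$ is literally constant along the imaginary axis; the trivial connection $d$ in this trivialization is then unitary, smooth across $v_i$, and orthogonal for both edges simultaneously. Replacing your strip interpolation by this straightened quarter-disc model, the rest of your gluing argument goes through verbatim.
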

\begin{proof}
It is enough to show that such connection exists in a neighborhood of $v_i$. Since then, one can obtain the global one via partition of unities.

For convenience, we identify a neighborhood of $v_i \in \Si$
with
\begin{equation}
Z^{o}:=\{(x,y)\in \R^2|x^2+y^2 < 1, x\geq0, y\geq0\}
\end{equation}
 where $v_i$ corresponds to 0.

Recall from \cite{MS} that we can take unitary trivialization of $E$ over $Z^{o}$,
$\Phi:E \to Z^{o} \times \C^n$. Here $\C^n$ is equipped with the standard complex structure $J_0$
and the standard symplectic form $\omega_0$, and $\Phi^*\omega_0 = \omega$ and
$\Phi^* J_0 = J$.
On the real (resp. imaginary)  axis $R \subset Z^{o}$ (resp. $I \subset Z^{o}$), we have Lagrangian subbundle $L_{i+1}$ (resp. $L_i$) of $ E$.
By modifying the trivialization $\Phi$ ( by multiplying elements of $U(n)$, in the neighborhood of $R$ and $I$),  we may assume that
the image of $L_i$ and $L_{i+1}$ under $\Phi$ is constant along $R$ and $I$ in $\C^n$.

Now choose a trivial connection  on $Z^{o} \times \C^n$ and pull back via $\Phi$ to obtain an orthogonal
connection of $E$ on $Z^{o}$.
\end{proof}

Now, we associate Chern-Weil Maslov index to the above bundle pair as before.
\begin{definition}
Let $\nabla$ be an \textbf{L}-orthogonal unitary connection of $(E,\textbf{L})$. The Maslov index of the bundle pair $(E,\textbf{L})$ is defined by
\begin{equation}
\mu_{CW}(E,\textbf{L}):=\frac{\sqrt{-1}}{\pi}\int_\Si{tr(F_\nabla)}
\end{equation}
\end{definition}
As in the previous case, we have
\begin{proposition}
$\mu_{CW}(E,\textbf{L})$ is independent of the choice of \textbf{L}-orthogonal unitary connection $\nabla$. It is also independent of the choice of an   complex structure.
\end{proposition}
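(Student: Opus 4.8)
The plan is to adapt, essentially verbatim, the two arguments already used for a single Lagrangian boundary condition: Proposition \ref{lem:stoke} for the independence of the connection, and the subsequent corollary for the independence of the complex structure. The only new feature is that $\bS$ is now subdivided into the edges $l_0,\dots,l_k$ meeting at the transverse corners $v_i$, so the point to verify is that the relevant boundary integrals still split edge-by-edge and that the corners contribute nothing.

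For the independence of the connection, let $\nabla^1,\nabla^2$ be two $\textbf{L}$-orthogonal unitary connections. Since $\bS\neq\varnothing$, $E$ admits a global complex trivialization, so I can write $\nabla^k=d+A^k$ in a common frame; since $tr(A^k\wedge A^k)=0$ (the diagonal terms are squares of $1$-forms and the off-diagonal ones cancel in pairs), we get $tr(F_{\nabla^k})=d\,tr(A^k)$ and hence $tr(F_{\nabla^1})-tr(F_{\nabla^2})=d\,tr(A)$ with $A:=A^1-A^2\in\Omega^1(\Si)\otimes End(E)$. Applying Stokes' theorem on the surface-with-corners $\Si$ gives $\int_\Si\big(tr(F_{\nabla^1})-tr(F_{\nabla^2})\big)=\int_{\bS}tr(A)=\sum_{i=0}^k\int_{l_i}tr(A)$, the corner points $v_i$ being of measure zero in this one-dimensional integral. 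On a fixed edge $l_i$ both connections preserve the real subbundle $L_i$, so $A(\PD{t})$ maps $L_i$ into $L_i$; choosing a real orthonormal frame of $L_i$ along $l_i$ exhibits $A(\PD{t})$ as the complexification of a real endomorphism, whence $tr(A)|_{l_i}$ is real. Thus $\int_{\bS}tr(A)$ is real, while $\int_\Si\big(tr(F_{\nabla^1})-tr(F_{\nabla^2})\big)$ is purely imaginary because each $tr(F_{\nabla^k})$ is (unitarity makes $F_{\nabla^k}$ skew-hermitian). A quantity that is simultaneously real and purely imaginary vanishes, which proves that $\mu_{CW}(E,\textbf{L})$ is independent of $\nabla$.

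For the independence of $J$, I would copy the homotopy argument of the corollary. Joining two compatible complex structures $J_0,J_1$ by a path $J_t$, form $\WT{E}=E\times I\to\Si\times I$ with Lagrangian subbundles $L_i\times I$ over $l_i\times I$, and pick an $(\textbf{L}\times I)$-orthogonal unitary connection $\WT{\nabla}$ restricting to $\WT{\nabla}_t$ on each slice. Since $d\,tr(F_{\WT{\nabla}})=tr(d_{\WT{\nabla}}F_{\WT{\nabla}})=0$ by the Bianchi identity, Stokes on $\Si\times I$ yields
$$0=\int_{\Si\times\{1\}}tr(F_{\WT{\nabla}_1})-\int_{\Si\times\{0\}}tr(F_{\WT{\nabla}_0})+\int_{\bS\times I}tr(F_{\WT{\nabla}}).$$
Over each face $l_i\times I$ the connection preserves the real subbundle $L_i\times I$, so exactly as above $tr(F_{\WT{\nabla}})$ is real there; hence the last integral is real, while the two slice integrals are purely imaginary. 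Taking imaginary parts forces $\int_{\Si}tr(F_{\WT{\nabla}_0})=\int_{\Si}tr(F_{\WT{\nabla}_1})$, i.e. equality of the two Maslov indices.

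The main obstacle is entirely at the vertices. I must ensure that the connection forms (and the homotopy connection $\WT{\nabla}$) extend as genuine smooth objects up to the corners, so that Stokes' theorem on the manifold-with-corners $\Si$ (resp. $\Si\times I$) is legitimate, and that the transverse meeting of $L_i$ and $L_{i+1}$ at $v_i$ introduces no hidden contribution. This is handled by using the corner models fixed in the existence lemma (constant trivializations near each $v_i$): near $v_i$ the integrands are smooth, the edges $l_i$ and $l_{i+1}$ contribute their boundary integrals independently, and the zero-dimensional corners add nothing. If one instead works with strip-like ends, one checks in addition that $tr(A)$ and $tr(F_{\WT{\nabla}})$ are integrable at the ends, which follows from the normalized (product) form of the connections there.
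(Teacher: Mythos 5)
Your proposal is correct and takes essentially the same route as the paper, which simply states that the proof is similar to that of Proposition \ref{lem:stoke} (and to the corollary on $J$-independence); your write-up is exactly that adaptation, with the additional useful care about the corners $v_i$ contributing nothing to the edge-by-edge Stokes argument.
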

\begin{proof}
The proof is similar to that of Proposition \ref{lem:stoke}.
\end{proof}

Note that we can choose  a compatible   complex structure $J$ satisfying
$$J\cdot L_i|_{v_i} = L_{i+1}|_{v_i}$$ at the marked point $v_i=l_i\cap l_{i+1}$ for each $i=0,1,\cdots,k$. We will use such a $J$ in the following discussions.

We recall the usual topological Maslov index associated to the bundle pair $(E, \bf{L})$.
First, given two Lagrangian subspaces $L_0$ and $L_1$ which intersects transversely in $V$, there exist a path from $L_0$ to $L_1$ that moves in the positive definite direction (which is unique up to
fixed end points). If $L_1 = J \cdot L_0$, then such a path can be taken to be $t \mapsto e^{\pi J t /2} L_0$.  For example, a loop in Lagrangian grassmanian obtained by joining positive definite paths from $L_0$ to $L_1$ and  from $L_1$ to $L_0$  has Maslov index $n = dim(L_0)$.

The topological Maslov index of the bundle pair $(E, \bf{L})$ can be defined by
first taking a trivialization of $E$ and taking a loop of Lagrangian subspaces along the boundary,
by gluing the Lagrangian subbundle data of the edges  at each marked point $v_i$ via positive definite direction path from $L_i$ to $L_{i+1}$. Denote this path by $L_{loop}$.
The winding number of  $L_{loop}$ defines Maslov index $\mu_{top}(E,\bf{L})$.

We also recall how the Fredholm index arises in this setting.
For a fixed $p>2$, consider a Banach manifold $\mathcal{P}$ of $W^{1,p}$ maps $\Si\to M$ with boundary condition $u(l_j) \subset L_j$ and
asymptotic condition $ lim_{z \to v_i} u(z) =p_i$. Then the moduli space of $J$-holomorphic maps from $\Si$ to $M$ can be identified with a zero set of a smooth section $\overline\partial_J:\mathcal{P}\to\mathcal{E}$ for a Banach bundle $\mathcal{E}$. More precisely, the fiber of $\mathcal{E}$ at $u$ is the space $L^p(\mathcal{A}^{0,1}\bigotimes u^*TM)$, and the section $\overline\partial_J$ is the antiholomorphic part of $du$ with respect to $J$.
If we linearize the $\overline\partial_J$ at $u\in\overline\partial_J^{-1}(0)$ and composite it with the projection map from $T_u\mathcal{E}$ to the fiber $\mathcal{E}_u$, we have a Fredholm operator $D_u\overline\partial_J:W^{1,p}(\Si,u^*TM)\to L^p(\mathcal{A}^{0,1}(\Si)\bigotimes u^*TM)$. The virtual dimension of the component of $\overline\partial_J^{-1}(0)$ containing $u$ is defined by the Fredholm index of $D_u\overline\partial_J$.
We denote the linearized Fredholm operator as $\overline{\partial}_{E,\textbf{L}}$.

Now, we plan to compare the Fredholm index of
$\overline{\partial}_{E,\textbf{L}}$ with $\mu_{CW}(E,\textbf{L})$,
$\mu_{top}(E,\bf{L})$.

\begin{proposition} We have
\begin{eqnarray}
Ind(\bar\partial_{E,\textbf{L}}) + (k+1)\frac{n}{2} &=&\mu_{CW}(E,\textbf{L})+ n \chi(\Sigma)  \label {eq:ind0} \\
Ind(\bar\partial_{E,\textbf{L}}) + (k+1)n &=& \mu_{top}(E,\textbf{L}) + n \chi(\Sigma). \label{eq:ind}
\end{eqnarray}
\end{proposition}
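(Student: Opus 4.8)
The plan is to establish both identities at once by relating the Chern-Weil and topological Maslov indices to a common Fredholm-index computation, using the Riemann-Roch / Atiyah-Singer index formula for the Cauchy-Riemann operator $\bar\partial_{E,\textbf{L}}$ on a surface with boundary and corners. The key observation is that the two equations differ only in the factor multiplying $(k+1)$ on the left and in which Maslov index appears on the right, so the difference of the two identities should reduce to a single local statement at the marked points $v_i$.

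First I would recall the Riemann-Roch formula for a Cauchy-Riemann operator on a bundle pair $(E,\textbf{L})$ over a smooth bordered surface \emph{without} corners, which reads $\mathrm{Ind}(\bar\partial) = n\chi(\Sigma) + \mu(E,L)$. To connect this with $\mu_{CW}$, I would invoke Theorem \ref{thm:main}, which gives $\mu(E,L)=\mu_{CW}(E,L)$, so that in the smooth case the index formula already matches \eqref{eq:ind0} and \eqref{eq:ind} with the corner-correction terms absent. The heart of the present proposition is therefore to account for the corners $v_0,\dots,v_k$ and the transverse Lagrangian conditions there. I would handle this by comparing the true surface $\Sigma$ (with its $k+1$ corners) to a ``rounded'' or ``capped'' smooth surface: at each $v_i$ one interpolates between $L_i|_{v_i}$ and $L_{i+1}|_{v_i}$ along the positive-definite path $t\mapsto e^{\pi J t/2}L_i$, which is exactly the path $L_{\mathrm{loop}}$ used to define $\mu_{top}$. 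This interpolation contributes $n/2$ to the winding at each corner when one uses the positive-definite (i.e.\ half-turn) convention, which is precisely the discrepancy between the topological Maslov index of the smoothed boundary loop and the Chern-Weil integral over the cornered surface.

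The main technical step is to compute the local contribution at each corner $v_i$ to each of the three quantities. For the Fredholm index side, I would use the standard analysis of the model operator on the strip $(-\infty,0]\times[0,1]$ with transverse constant Lagrangian boundary conditions $L_i,L_{i+1}$: after choosing $J$ with $J\cdot L_i|_{v_i}=L_{i+1}|_{v_i}$ (as arranged in the text just before the statement), the eigenvalues of the associated self-adjoint operator on the interval are the half-integers, and the spectral-flow / APS contribution at each puncture is $n/2$. This explains the term $(k+1)\tfrac{n}{2}$ in \eqref{eq:ind0}. For \eqref{eq:ind}, the factor $(k+1)n$ arises because $\mu_{top}$ already builds in a full positive-definite path (winding $n/2$ more than $\mu_{CW}$) at each of the $k+1$ corners; the difference of the two equations is then exactly $(k+1)n - (k+1)\tfrac{n}{2} = (k+1)\tfrac{n}{2} = \mu_{top}-\mu_{CW}$, so establishing \eqref{eq:ind0} together with the relation $\mu_{top}(E,\textbf{L}) = \mu_{CW}(E,\textbf{L}) + (k+1)\tfrac{n}{2}$ yields \eqref{eq:ind} immediately.

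I would therefore organize the argument as: (i) prove $\mu_{top}(E,\textbf{L}) = \mu_{CW}(E,\textbf{L}) + (k+1)\tfrac{n}{2}$ by a direct comparison of the boundary winding (the curvature integral over $\Sigma$ measures the rotation of $\det^2 u(t)$ along the \emph{open} edges, while $\mu_{top}$ inserts an extra half-turn of $\det^2$ at each corner via the positive-definite path, contributing $\tfrac{n}{2}$ per corner to the winding number), and (ii) prove \eqref{eq:ind} by the Riemann-Roch index theorem for the cornered bundle pair with positive-definite corner conditions, in the form $\mathrm{Ind}(\bar\partial_{E,\textbf{L}}) = n\chi(\Sigma) + \mu_{top}(E,\textbf{L}) - (k+1)n$, which is the established Fredholm index formula for $J$-holomorphic polygons (as in the Fukaya-category literature). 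The hard part will be (i): pinning down the precise half-integer bookkeeping so that the corner contribution comes out to exactly $n/2$ and not $n$ or $-n/2$, i.e.\ matching the orientation and the $\det$-versus-$\det^2$ conventions between the Chern-Weil curvature integral and the winding-number definition of $\mu_{top}$. I expect this to hinge on a careful one-dimensional model computation with $L_{i+1}=J\cdot L_i$, where the eigenvalue $\tfrac12$ and the half-rotation of the determinant must be tracked consistently.
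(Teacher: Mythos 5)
Your overall strategy coincides with the paper's: first establish \eqref{eq:ind} from the known Fredholm index theory for polygons (gluing a standard weighted strip-with-cap piece of index $n$ at each puncture, as in \cite{FOOO}), then reduce \eqref{eq:ind0} to the relation $\mu_{top}(E,\textbf{L})=\mu_{CW}(E,\textbf{L})+(k+1)\tfrac{n}{2}$ and subtract. The genuine gap is in your step (i), which you yourself flag as ``the hard part'': you never actually prove that each corner contributes exactly $\tfrac{n}{2}$ to the difference $\mu_{top}-\mu_{CW}$. Your proposed justification --- that the curvature integral over the cornered surface measures the rotation of $\det^2 u(t)$ along the open edges while $\mu_{top}$ inserts an extra half-turn at each corner --- does not go through as stated, because the identification of $\frac{\I}{\pi}\int_\Sigma tr(F_\nabla)$ with a winding number (the first proof of Theorem \ref{thm:main}) relies on transporting a single parallel Lagrangian frame around a closed boundary loop; at a corner $v_i$ the parallel frames of $L_i$ and $L_{i+1}$ are unrelated, so on a cornered surface the curvature integral is not directly the rotation number of anything defined on the open edges, and the half-integer could a priori come out as $n$ or $-\tfrac{n}{2}$ exactly as you worry.

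The paper closes this gap with an explicit local model that your proposal lacks: take the quarter-disc $Z=\{x^2+y^2\le 1,\ x,y\ge 0\}$ with the trivial bundle $Z\times\C^n$, constant Lagrangian conditions $\Lambda$ and $J_0\Lambda$ on the two axes, and a positive-definite path on the arc, together with an orthogonal connection that is trivial near the axes. Gluing four rotated copies of this bundle pair produces a bundle pair on the full disc $D^2$ whose boundary condition is a genuine Lagrangian \emph{loop} $\Gamma$ built from four positive-definite arcs; Theorem \ref{thm:main} applies to the disc, the loop has Maslov index $2n$, and the rotational symmetry of the glued connection forces each quadrant to carry exactly one quarter of the curvature integral, giving $\mu_{CW}(Z\times\C^n,\Lambda)=\tfrac{n}{2}$. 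Additivity of the curvature integral under attaching these $Z$-pieces at the corners of $\Sigma$ then yields $\mu_{top}=\mu_{CW}+(k+1)\tfrac{n}{2}$. Your alternative suggestion of an APS/spectral-flow computation on the strip with $L_{i+1}=J\cdot L_i$ could in principle produce the $(k+1)\tfrac{n}{2}$ correction in \eqref{eq:ind0} directly, but that too is a model computation you assert rather than carry out. In short: correct skeleton and correct identification of the decisive lemma, but the lemma itself is left unproved.
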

\begin{proof}
From \cite{KL} Theorem 3.4.2, we have
$$Ind(\bar\partial_{E,L}) = \mu(E,L) + n \chi(\Sigma),$$
where $\chi(\Sigma)$ is the Euler characteristic of $\Si$.

The case of boundary condition on transversally intersecting  Lagrangian submanifolds (with $k+1$ marked points),  can be seen from the gluing principle of indices: At each marked point $v_i$, consider
$$Z_- = \{z \in \C| |z| \leq 1 \} \cup \{ z \in \C| Re\, z \geq 0, | Im \,z| \leq 1 \}.$$
and consider $Z_- \times E|_{v_i}$ and Lagrangian boundary condition for $Im \, z = -1$ is $L_{i+1}|_{v_i}$ and for $Im \, z = +1$ is $L_{i}|_{v_i}$ and Lagrangian boundary condition on the arc (left side of the unit circle) is given by a path of positive definite direction from $L_{i}|_{v_i}$ to $L_{i+1}|_{v_i}$.
Denote by $\lambda$ the above Lagrangian bundle data.
Recall from \cite{FOOO} that the index of $\bar\partial_{\lambda,Z_-}$ of
weighted Cauchy-Riemann operator is $n$.
By gluing $Z_-$ at each marked point, we obtain the equation  \eqref{eq:ind}.
(We refer readers to \cite{FOOO} for more details on this argument).

To prove the first identity, we find a relation between
$\mu_{top}(E,\textbf{L})$ and $\mu_{CW}(E,\textbf{L})$ by studying
the index of a basic piece. Instead of $Z_-$, we consider the
following domain $Z$ to compute $\mu_{CW}$ in an easier way.
\begin{equation}\label{Z}
Z:=\{(x,y)\in \R^2|x^2+y^2 \leq 1, x\geq0, y\geq0\}
\end{equation}

We consider  the following bundle  pair on $Z$. Consider $Z \times \C^n \to Z$ equipped
with the standard symplectic structure, and we describe Lagrangian subbundle on $\partial Z$.
Note that the boundary $\partial Z$ consists of three parts, $R, I, A$ which are real axis, imaginary axis and
arc respectively.

By identifying $E_{v_i} \cong \C^n$ (so that $\omega, J$ becomes $\omega_0, J_0$),  denote by $\Lambda$ (resp. $\Lambda'$)
the Lagrangian subspace of $\C^n$ corresponding to $L_{i}|_{v_i}$ (resp. $L_{i+1}|_{v_i}$).
Note that $J_0 \Lambda = \Lambda '$.

We define the Lagrangian subbundle over $R$  to be the  constant $R \times \Lambda \subset R \times \C^n$,
over $I$ to be the constant $I \times \Lambda' \subset I \times \C^n$ and over $A$ in the counterclockwise
direction, to be a positive definite direction path in Lagrangian grassmanian from $\Lambda$ to $\Lambda'$.  We assume that the path on $A$ is constant near the axis $I$ or $R$.
We may denote this Lagrangian subbundle on $\partial Z$ by $\Lambda$.

Now, we compute $\mu_{CW}(Z \times \C^n, \Lambda)$. Now we can take
an $\Lambda$-orthogonal connection as follows.  We choose a map
$\gamma:[0,1]\rightarrow U(n)$ whose column vectors form a unitary
frame of $\Lambda$ on the arc $A \subset \partial Z$ such that
$$\gamma(1)= e^{\pi i/2}\cdot\gamma(0),$$
and constant in $U(n)$ near end points 0 and 1. We may take a
connection $\nabla$ on the bundle satisfying $\nabla
(\gamma\cdot\epsilon_j) \equiv0$ (for $j=1,\cdots,n$) near the arc,
and $\nabla\equiv d$ near the real axis and imaginary axis. This
defines a $\Lambda$-orthogonal connection and we define
$$\mu_{CW}(Z \times \C^n, \Lambda):=\frac{\I}{\pi}\int_Z{tr(F_\nabla)}.$$

\begin{lemma}
$\mu_{CW}(Z \times \C^n, \Lambda)$ is equal to $\frac{n}{2}$ which is the half of topological Maslov index of loop $\gamma*( e^{\pi i/2}\cdot\gamma)$ in $\mathcal{L}(n)$, where $\gamma*(e^{\pi i/2}\cdot\gamma)$ is a smooth function from [0,2] to $U(n)$ defined by
\begin{align}\label{quad}
\gamma*( e^{\pi i/2} \cdot\gamma)(t):=
\begin{cases}
\gamma(t) & \text{if $t\in[0,1]$}\\
e^{\pi i/2}\cdot\gamma(t-1) & \text{if $t\in[1,2]$}
\end{cases}
\end{align}
\end{lemma}

 \begin{proof}
 Consider a (trivial) complex vector bundle on $D^2$. Consider a Lagrangian subbundle $L_\Gamma$ over $\partial D^2$ by concatenating four paths $\Gamma:=\gamma*(e^{\pi i/2}\cdot\gamma)*(e^{\pi i}\cdot\gamma)*(e^{3\pi i/2}\cdot\gamma)$  as in (\ref{quad}).
 Note that, since $e^{3\pi i/2}\cdot\gamma(1)=\gamma(0)$, this path $\Gamma$ is a loop in $U(n)$  and hence  loop in $\mathcal{L}(n)$. Thus if $\nabla$ is an orthogonal connection, $\frac{\I}{\pi}\int_D{tr(F_\nabla)}$ gives the topological Maslov index of $(D^2\times\C^n,L_\Gamma)$ from the Theorem \ref{thm:main}.

Now, we construct an orthogonal connection $\nabla$ on $D^2$ from that of $Z$.
 Note that the $\nabla$ constructed on $Z$ induces a connection on $D^2$ by pullback of the map $m_k:D^2\rightarrow D^2$, $z\mapsto e^{\frac{k \pi i}{2}}\cdot z$ for $k=0,1,2,3$. Then, since $\nabla\equiv d$ near the real axis and imaginary axis, the pullback connection on each $e^{\frac{k\pi i}{2}}\cdot Z$ $(k=0,1,2,3)$ can be glued to give a connection on $D$. It is easy to see that this connection is indeed orthogonal connection for the pair  $(D^2\times\C^n,L_\Gamma)$.
 Note that the curvature integral on each quadrant of $D^2$ are the same:
$$\int_Z tr(F_\nabla)=\int_{m_k^{-1}(Z)} tr(m_k^*F_{\nabla})=\int_{m_k^{-1}(Z)} tr(F_{m_k^*\nabla}).$$
Hence we have
 \begin{align}
 \frac{\I}{\pi}\int_Z tr(F_\nabla)&=\frac{1}{4} \frac{\I}{\pi} \int_{D^2} tr(F_\nabla) \\
 &=\frac{1}{4}\text{(topological Maslov index of $\Gamma$ in $\mathcal{L}(n)$)} \\
 &=\frac{1}{2}\text{(topological Maslov index of $\gamma*(i\cdot\gamma)$ in $\mathcal{L}(n)$)}\\
 &= \frac{n}{2}
 \end{align}
The last identity follows from the fact that since each $\gamma$ is
chosen to be positive definite, $\gamma*(e^{\pi i/2}\cdot\gamma)$ is
a positive definite loop whose Maslov index is equal to $n$, which
is the dimension of Lagrangian subspace.
\end{proof}

Now, as before, we attach bundle pair over $Z$ to that over $\Sigma$ at each marked point.
After attaching $(k+1)$ copies of bundle data on $Z$, the resulting Lagrangian subbundle along
the boundary  is obtained by connecting $L_i$ and $L_{i+1}$  by positive definite direction paths and
it becomes $L_{loop}$ which defines $\mu_{top}$.
Thus we have the following identity.
\begin{equation}
\displaystyle\mu_{top}(E,\textbf{L})=\mu_{CW}(E,\textbf{L})+\sum_{i=0}^k\mu_{CW}(Z\times
{\C}^n,\Lambda) = \mu_{CW}(E,\textbf{L}) + \frac{(k+1)n}{2}
\end{equation}
Combining with the equation \eqref{eq:ind}, we obtain the equation \eqref{eq:ind0}.
This finishes the proof of the proposition.
\end{proof}

When $k=0$, and $\Si$ is a bi-gon,
$Ind(\bar\partial_{E,\textbf{L}})$  which equals $\mu_{top}(E,\textbf{L}) -n$, is called
 the Maslov-Viterbo index.
Thus we obtain the following corollary.
\begin{corollary}
\begin{equation}
\textrm{Maslov-Viterbo
index}=Ind(\bar\partial_{E,\textbf{L}})=\mu_{CW}(E,\{L_0,L_1\})
\end{equation}
\end{corollary}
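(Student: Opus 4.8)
The plan is to derive the corollary as an immediate specialization of the preceding Proposition to the bi-gon, so that the only substantive work is to record the correct values of $\chi(\Sigma)$ and of the number of marked points that enter the identities \eqref{eq:ind0} and \eqref{eq:ind}.

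First I would fix the combinatorics of the bi-gon: it is topologically a disc, so $\chi(\Sigma)=1$, and it carries two vertices together with two edges on which the Lagrangian subbundles $L_0$ and $L_1$ sit, so the sum over marked points in the Proposition contributes exactly two terms. Feeding these data into \eqref{eq:ind0} gives
\[
Ind(\bar\partial_{E,\textbf{L}}) + 2\cdot\frac{n}{2} \;=\; \mu_{CW}(E,\{L_0,L_1\}) + n ,
\]
and the two occurrences of $n$ cancel, which yields the second equality $Ind(\bar\partial_{E,\textbf{L}}) = \mu_{CW}(E,\{L_0,L_1\})$ of the corollary.

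Next I would read off the Maslov--Viterbo identification from \eqref{eq:ind} with the same data: that identity becomes $Ind(\bar\partial_{E,\textbf{L}}) + 2n = \mu_{top}(E,\textbf{L}) + n$, hence $Ind(\bar\partial_{E,\textbf{L}}) = \mu_{top}(E,\textbf{L}) - n$, which is exactly the quantity classically called the Maslov--Viterbo index, as already noted just before the corollary. Concatenating the two computations then produces the full chain $\textrm{Maslov--Viterbo index} = Ind(\bar\partial_{E,\textbf{L}}) = \mu_{CW}(E,\{L_0,L_1\})$.

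The step I expect to demand the most care is not the arithmetic but the pinning down of conventions. One must check that the Euler-characteristic normalization in the base formula $Ind(\bar\partial_{E,L}) = \mu(E,L) + n\chi(\Sigma)$ borrowed from \cite{KL} is applied consistently after the two half-disc pieces $Z_-$ have been glued in at the vertices, and that the winding-number normalization of $\mu_{top}$ used throughout agrees with the sign convention in the original definition of the Maslov--Viterbo index. Once these normalizations are fixed, the corollary reduces to the substitution displayed above.
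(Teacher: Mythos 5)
Your proposal is correct and follows essentially the same route as the paper, which obtains the corollary by specializing the two identities \eqref{eq:ind0} and \eqref{eq:ind} of the preceding Proposition to the bi-gon ($\chi(\Sigma)=1$, two marked points) and invoking the definition of the Maslov--Viterbo index as $\mu_{top}(E,\textbf{L})-n$. Your count of two marked points (i.e.\ $k=1$ in the paper's indexing of $v_0,\dots,v_k$, matching the two Lagrangian labels $L_0,L_1$) is the consistent one, even though the remark preceding the corollary in the paper says ``$k=0$''.
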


%
%
%

\section{Orbifold Maslov Index}
In this section, we extend the definition of Maslov index to the case of orbifolds.
Namely, consider an orbi-bundle over a
bordered orbifold Riemann surface with interior singularities, and a Lagrangian subbundle along the boundary.  Note that orbi-bundles in these cases are not trivial bundles, and hence topological
definition of Maslov index is not directly extended to these cases.

But Chern-Weil definition extends naturally by requiring the connection to be invariant under local group actions near orbifold singularities.
Using the Chern-Weil definition, we show that there is a well-defined topological definition
of orbifold Maslov index.

At the end of this section, we show a relation of orbifold Maslov index and desingularized Maslov
index introduced by Poddar and the first author \cite{CP}, using the desingularization procedure
introduced by Chen and Ruan \cite{CR}.

\subsection{Orbifold Chern-Weil Maslov index}
We first recall the definition of bordered orbifold Riemann surface
and $J$-holomorphic maps to  almost complex orbifolds. We denote
$\bz=(z_1,\cdots,z_k), \bm=(m_1,\cdots,m_k)$ in the following.
\begin{definition}
Let $\Sigma$ be a bordered Riemann surface with complex structure
$j$. $(\Sigma,\bz, \bm)$ is called a bordered orbifold Riemann
surface with interior singularities if $\bz$ are distinct interior
of $\Sigma$, and if a disc neighborhood of each $z_i$ is uniformized
by a branched covering map $z \mapsto z^{m_i}$.
\end{definition}
Thus the disc neighborhood $U_i$ of $z_i$ is understood as a quotient space of
$D^2$ by the standard rotation action of the local group $\Z/m_i\Z$.
We denote by $\bsg = (\Sigma,\bz, \bm)$ the orbifold bordered Riemann surface.

In our case, we can also consider a smooth Riemann surface $\WT{\Sigma}$ with a
branch covering map $\pi:\WT{\Sigma} \to \Sigma$ such that
the orbifold $\bsg$ is obtained as the quotient of $\WT{\Sigma}$ by the
action of deck transformation group $G$  of $\pi$ ( i.e. $\bsg$ is good orbifold).

Such $\WT{\Sigma}$ can be obtained as follows.
Consider two copies of $\bsg$ labelled as $\bsg_1, \bsg_2$, and glue $\bsg_1$ with $\OL{\bsg_2}$(opposite orientation) to obtain $\bsg_{double}$, which becomes a good orbifold.
Hence it has a  smooth Riemann surface $\WT{\Sigma}_{double}$ with branch covering  $\pi :\WT{\Sigma}_{double} \to \bsg_{double}$. By considering only $\WT{\Sigma}:=\pi^{-1}(\bsg_1)$,
we obtain the desired Riemann surface with boundary $\WT{\Sigma}$.

Now, consider an orbifold vector bundle $E \to \bsg$. (see for example \cite{CR}).
On the neighborhood $U_i$ of $z_i$ above,  orbifold vector bundle $E|_{U_i} \to U_i$ has a uniformizing
chart $ D^2 \times \R^n \to D^2$ together with $\Z/m_i\Z$-action compatible with the orbifold
structure of $U_i$.
This may be understood as a genuine vector bundle $\WT{E} \to \WT{\Sigma}$ with an action of
deck transformation group $G$, which is compatible with that of $\WT{\Sigma}$.

Also recall that a connection $\nabla$ on orbifold vector bundle $E \to \bsg$ is defined to be
invariant under local group action.

We define a bundle pair over $(\bsg,\partial \bsg)$.
\begin{definition}
We denote by a {\em bundle pair $(E, L) \to (\bsg,\partial \bsg)$}, an orbifold symplectic  vector bundle $E$ over $\bsg$ and a Lagrangian subbundle   $L$ over $\partial \Si$.
\end{definition}
We choose a compatible complex structure $J$ of $E$.
The bundle data in the orbifold case arises is obtained by a good map from $(\Sigma,\bz, \bm)$ to a symplectic orbifold with Lagragnian boundary condition.
The notion of a good map was introduced by Chen and Ruan (which we refer readers to \cite{CR}), which enables one to pull-back bundles. Given a $J$-holomorphic map which is a good map, we
obtain a bundle pair by pull-back  tangent bundles.

We define $L$-orthogonal connection of a bundle pair as follows
\begin{definition}
Let $(E,L)$ be a bundle pair over $(\bsg,\partial \bsg)$. A unitary connection
$\nabla$ on $E$ is called orthogonal connection
if the parallel transport  along $\bS$ via $\nabla$ preserves Lagrangian subbundle $L \to \bS$.
\end{definition}

Now, we give a definition of the Maslov index $\mu_{CW}$ for $(E,L)\to (\Sigma,\bz, \bm)$ in terms of curvature integral:
\begin{definition}\label{def:mu}
Let $\nabla$ be an orthogonal connection of a bundle pair $(E,L)\to (\bsg,\partial \bsg)$.
We define the Maslov index of the bundle pair $(E,L)$ as
$$\mu_{CW}(E,L)=\frac{i}{\pi}\int_\Si{tr(F_\nabla)}$$
where $F_\nabla\in\Omega^2(\Si,End(E))$ is the curvature induced by $\nabla$.
\end{definition}

As in the previous cases, we have
\begin{proposition}
$\mu_{CW}(E,L)$  in Definition \ref{def:mu}  is independent of the choice of $L$-orthogonal  connection $\nabla$. It is also independent of the choice of a complex structure $J$.
\end{proposition}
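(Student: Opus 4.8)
The plan is to reproduce the two arguments of Section~4, namely Proposition~\ref{lem:stoke} for connection-independence and the corollary following it for independence of $J$, almost verbatim: both of those arguments are carried out on the boundary $\partial\bsg$, which is disjoint from the interior singular points $\bz$. The only genuinely new ingredient is that the Stokes' theorem step must now be performed on the orbifold $\bsg$, whose singularities lie in the interior. For connection-independence, I would take two $L$-orthogonal connections $\nabla^1,\nabla^2$ and set $A:=\nabla^1-\nabla^2$. On each uniformizing chart $D^2$ both $\nabla^k$ are smooth $\Z/m_i\Z$-invariant connections, so their difference $A$ is a smooth invariant $End(E)$-valued $1$-form and descends to a genuine such form on $\bsg$. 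As in the smooth case one has $tr(F_{\nabla^1})-tr(F_{\nabla^2})=d\,tr(A)$, so it suffices to prove $\int_\bsg d\,tr(A)=\int_{\partial\bsg}tr(A)$ together with the fact that the right-hand side is real.

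To legitimize this Stokes identity across the interior singularities, I would excise small orbifold disks $D_\epsilon(z_i)$ about each $z_i$ and apply the ordinary Stokes theorem on the smooth surface-with-boundary $\bsg\setminus\bigcup_iD_\epsilon(z_i)$, whose boundary is $\partial\bsg$ together with the small circles $C^i_\epsilon$. Since $tr(A)$ is bounded on each uniformizing chart while the length of $C^i_\epsilon$ tends to $0$, the circle contributions $\int_{C^i_\epsilon}tr(A)$ vanish as $\epsilon\to0$, and letting $\epsilon\to0$ yields $\int_\bsg d\,tr(A)=\int_{\partial\bsg}tr(A)$. On $\partial\bsg$ the orthogonality argument of Proposition~\ref{lem:stoke} applies unchanged: in a real orthonormal frame of $L$ the matrix entries of $A$ are real-valued, hence $\int_{\partial\bsg}tr(A)$ is real. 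On the other hand each $tr(F_{\nabla^k})$ is purely imaginary because $\nabla^k$ is unitary, so $\int_\bsg\bigl(tr(F_{\nabla^1})-tr(F_{\nabla^2})\bigr)$ is purely imaginary. A quantity that is simultaneously real and purely imaginary must vanish, giving equality of the two Chern--Weil indices.

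For independence of the complex structure $J$, I would mimic the corollary following Proposition~\ref{lem:stoke}: connect two compatible complex structures $J_0,J_1$ by a path $J_t$, form the orbifold bundle pair $\WT{E}=E\times I\to\bsg\times I$ with Lagrangian boundary condition $L\times I$, and choose an $(L\times I)$-orthogonal unitary connection $\WT\nabla$ restricting to the two given connections at the ends. Applying the same excision-and-Stokes argument to the closed form $tr(F_{\WT\nabla})$ on $\bsg\times I$, now excising tubular neighborhoods of the singular arcs $\bz\times I$ whose shrinking contributions again vanish, produces a relation of the form $\int_{\bsg\times\{1\}}tr(F_{\WT\nabla_1})-\int_{\bsg\times\{0\}}tr(F_{\WT\nabla_0})=-\int_{\partial\bsg\times I}tr(F_{\WT\nabla})$. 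Over $\partial\bsg\times I$ the bundle with $\{J_t\}$ is the complexification of the real bundle $L\times I$, so $tr(F_{\WT\nabla})$ is real-valued there and the side integral is real, while the difference of the two end integrals is purely imaginary; hence both sides vanish and $\int_\bsg tr(F_{\WT\nabla_0})=\int_\bsg tr(F_{\WT\nabla_1})$.

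The main obstacle is precisely the step that separates the orbifold case from the smooth one: making Stokes' theorem valid across the interior singular points $\bz$, respectively the singular arcs $\bz\times I$. The excision argument resolves this cleanly because the relevant integrands $tr(A)$ and $tr(F_{\WT\nabla})$ are invariant forms that remain bounded on the uniformizing charts, so the boundary circles (resp.\ cylinders) of the excised neighborhoods contribute nothing in the limit. Everything else is a faithful repetition of the smooth-case proofs, which already take place on $\partial\bsg$ and therefore never see the singular set.
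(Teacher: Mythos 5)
Your proposal is correct and follows essentially the same route as the paper, which simply invokes the argument of Proposition~\ref{lem:stoke} (and the corollary after it) ``using Stokes' theorem in the orbifold setting.'' The only addition is that you explicitly justify the orbifold Stokes step by excising small disks (resp.\ tubes) around the singular points and letting their radii shrink, a standard and valid detail that the paper leaves implicit.
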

\begin{proof}
The proof is similar to that of Proposition \ref{lem:stoke}, using Stoke's theorem in the orbifold setting.
\end{proof}

\subsection{Topological definition of orbifold Maslov index}
One  possible approach to define Maslov index topologically in the orbifold case is as follows.

\begin{definition}
Consider a bundle pair $(E,L)\to (\bsg,\partial \bsg)$.
Take branch covering
 $\pi:\WT{\Si} \to \bsg$ by a smooth Riemann surface $\WT{\Si}$ with boundary, and
 consider pull-back bundles $(\pi^*E , \pi^*L)$ which becomes a smooth bundle pair
 on $(\WT{\Si},\partial \WT{\Si})$.

We define $$\mu_{\pi}(E,L) = \frac{1}{|G|} \mu (\pi^*E, \pi^*L)$$
 where $|G|$ is the degree of the branch covering map $br$.
\end{definition}
A priori, it is not clear (at least for the authors)  if  $\mu_{\pi}(E,L)$ is independent of the choice of the branch covering map $\pi$. But we use Chern-Weil definition of Maslov index to prove that
such a topological index is independent of the choice of a branch covering map, and prove that it is the same as Chern-Weil Maslov index. This should be useful in actual computations of Maslov indices.

\begin{proposition}
We have
$$\mu_{\pi}(E,L) = \mu_{CW}(E,L).$$
In particular, $\mu_{\pi}(E,L)$ is independent of the choice $\pi$ of the branch covering map.
\end{proposition}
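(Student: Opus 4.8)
The plan is to transport the orbifold data up to the smooth branch cover $\WT{\Si}$, apply the already-proved smooth equivalence $\mu = \mu_{CW}$ there (Theorem \ref{thm:main}), and then push the curvature integral back down to $\bsg$ by a degree-$|G|$ change of variables. Concretely, I would start from an orthogonal connection $\nabla$ on the orbifold bundle pair $(E,L)\to(\bsg,\partial\bsg)$ and form its pullback $\pi^*\nabla$ on $(\pi^*E,\pi^*L)\to\WT{\Si}$.

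The first thing to verify is that $\pi^*\nabla$ is a genuine smooth orthogonal unitary connection on the smooth bundle pair $(\pi^*E,\pi^*L)$. Near a branch point $\WT z$ lying over an orbifold point $z_i$ of order $m_i$, the local uniformizing coordinate of $\bsg$ at $z_i$ pulls back to the ambient smooth coordinate on $\WT{\Si}$ (this is exactly the content of $\bsg$ being the good orbifold $\WT{\Si}/G$), and since $\nabla$ is by definition invariant under the local group action, its connection form is already smooth in the uniformizing chart; hence $\pi^*\nabla$ extends smoothly across $\WT z$. On the boundary $\pi$ restricts to a covering $\partial\WT{\Si}\to\partial\bsg$, and parallel transport for $\pi^*\nabla$ is just the pullback of parallel transport for $\nabla$, so it preserves $\pi^*L$; unitarity is immediate. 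With $\partial\WT{\Si}\neq\varnothing$, Theorem \ref{thm:main} then applies on $\WT{\Si}$ and gives
$$\mu(\pi^*E,\pi^*L)=\mu_{CW}(\pi^*E,\pi^*L)=\frac{\I}{\pi}\int_{\WT{\Si}}tr(F_{\pi^*\nabla}).$$

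Using functoriality of curvature, $F_{\pi^*\nabla}=\pi^*F_\nabla$ and hence $tr(F_{\pi^*\nabla})=\pi^*tr(F_\nabla)$, so the remaining and central step is the change-of-variables identity
$$\int_{\WT{\Si}}\pi^*tr(F_\nabla)=|G|\int_{\bsg}tr(F_\nabla).$$
Granting this, we obtain
$$\mu(\pi^*E,\pi^*L)=|G|\cdot\frac{\I}{\pi}\int_{\bsg}tr(F_\nabla)=|G|\,\mu_{CW}(E,L),$$
whence $\mu_\pi(E,L)=\frac{1}{|G|}\mu(\pi^*E,\pi^*L)=\mu_{CW}(E,L)$; and since the right-hand side $\mu_{CW}(E,L)$ does not refer to $\pi$ at all, independence of the branch covering follows at once.

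The main obstacle I expect is proving the change-of-variables identity uniformly across the branch points. Over the regular locus $\bsg\setminus\{z_1,\dots,z_k\}$ there is no isotropy and $\pi$ is an honest $|G|$-to-one local diffeomorphism, so the identity is the usual multiplicativity of integrals under a finite cover. The delicate part is at each singular point $z_i$, where one must reconcile the orbifold integral $\int_{\bsg}$ (computed chart-wise as $\frac{1}{m_i}$ times the integral over the uniformizing disk) with the smooth integral over the preimages in $\WT{\Si}$. Here I would use that $z_i$ has exactly $|G|/m_i$ preimages and that near each preimage $\pi$ identifies the uniformizing coordinate at $z_i$ with the smooth coordinate on $\WT{\Si}$, so each preimage contributes $m_i\int^{orb}_{U_i}tr(F_\nabla)$; summing over the $|G|/m_i$ preimages produces the factor $|G|$. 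Since $tr(F_\nabla)$ is a smooth, hence integrable, orbifold $2$-form, the branch locus has measure zero, and a partition of unity assembles these local computations into the global identity.
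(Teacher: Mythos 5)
Your proposal is correct and follows essentially the same route as the paper: pull back the orthogonal connection to the smooth branch cover, apply Theorem \ref{thm:main} there, and use the degree-$|G|$ relation between the curvature integrals on $\WT{\Si}$ and on $\bsg$. You supply more detail than the paper does (smoothness of $\pi^*\nabla$ across the branch points and the chart-wise bookkeeping at the orbifold points), but the argument is the same.
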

\begin{proof}
Let $\nabla$ be an $L$-orthogonal connection on an orbifold vector bundle pair $(E,L) \to (\bsg,\partial \bsg)$. Let $\pi^*\nabla$ be a pull-back connection on $\pi^*E$, which becomes
a $L$-orthogonal connection of bundle pair $(\pi^*E, \pi^*L)$

By theorem \ref{thm:main}, we have $\mu (\pi^*E, \pi^*L) = \mu_{CW}(\pi^*E, \pi^*L)$.
Therefore,  we have
 $$\mu (\pi^*E, \pi^*L) = \mu_{CW}(\pi^*E, \pi^*L)= \int_{\WT{\Sigma}} F_{\pi^*\nabla}
 = |G| \int_{\Sigma} F_{\nabla} = |G| \mu_{CW}(E,L).$$
\end{proof}

In fact, one may observe that the above argument works for branch coverings between two
smooth bundle pairs also.

Consider a branched covering $\phi:\Sigma_1 \to \Sigma_2$ of degree $m$ for
bordered Riemann surfaces $\Sigma_1,\Sigma_2$.
(Here we assume that the branching
locus lies in the interior of $\Sigma_2$)
Then, given a smooth map $u : (\Sigma_2 ,\partial \Sigma_2) \to (M,L)$ for
a symplectic manifold $M$ and Lagrangian submanifold $L$, we obtain by composition
another map $u \circ \phi : (\Sigma_1 ,\partial \Sigma_1) \to (M,L)$.

Define the Maslov index of $u$ to be $\mu(u^*TM,u|_{\partial \Sigma_2}^*TL)$,
and similarly for $u \circ \phi$.
Then, the same argument as in the above proposition proves that we have
$$\mu(u \circ \phi) = m \cdot \mu(u).$$

\subsection{Relation to desingularization}
In the rest of the paper, we recall, what is called the desingularization of orbi-bundle from \cite{CR}, and recall
the desingularized Maslov index from \cite{CP}. Then, we will find a relation between
the desingularized Maslov index and the  Maslov index defined in this paper.

We recall the desingularization of orbi-bundle on an orbifold
Riemann surface by Chen and Ruan (\cite{CR}). Consider $\bsg =
(\Sigma,\bz,\bm)$ as before. Let $E$ be a
complex orbifold bundle of rank $n$ over $\bsg$. Then at each
singular point $z_i$, $E$ determines a representation
$\rho_i:\Z_{m_i} \to Aut(\C^n)$ so that over a disc neighborhood
$D_i$ of $z_i$, $E$ is uniformized by $(D_i \times \C^n,
\Z_{m_i},\pi)$ where the action of $\Z_{m_i}$ on $D_i \times \C^n$
is given by

\begin{equation}
e^{2\pi i/m_i} \cdot (z,w)=\big(e^{2\pi i/m_i} z, \rho_i(e^{2\pi i/m_i})w \big)
\end{equation}
for any $w \in \C^n$. Each representation $\rho_i$ is uniquely determined by a $n$-tuple of integers
$(m_{i,1},\cdots,m_{i,n})$ with $0 \leq m_{i,j} < m_i$, as it is given by matrix

\begin{equation}
\rho_i(e^{2\pi i/m_i})= diag( e^{2\pi i m_{i,1}/m_i},\cdots, e^{2\pi i m_{i,n}/m_i} \big).
\end{equation}

Over the punctured disc $D_i \setminus \{0\}$ at $z_i$, $E$ inherits a specific trivialization from
$(D_i \times \C^n,\Z_{m_i},\pi)$ as follows:
We define a $\Z_{m_i}$-equivariant map $\Psi_i:D \setminus \{0\} \times \C^n \to D \setminus \{0\} \times
\C^n$ by

\begin{equation}
(z,w_1,w_2,\cdots,w_n) \to (z^{m_i}, z^{-m_{i,1}}w_1,\cdots,z^{-m_{i,n}}w_n),
\end{equation}
where $\Z_{m_i}$ acts trivially on the second $D\setminus \{0\} \times \C^n$. Hence $\Psi_i$ induces
a trivialization $\Psi_i:E_{D_i \setminus \{0\}} \to D_i \setminus \{0\} \times \C^n$.
We extend the smooth complex vector bundle $E_{\bsg \setminus \bz}$ over $\bsg \setminus \bz$ to
a smooth complex vector bundle over $\bsg$ by using these trivializations $\Psi_i$ for each $i$. The resulting
complex vector bundle is called the desingularization of $E$ and denoted by $|E|$.
The essential point as observed in \cite{CR} is that the sheaf of holomorphic sections of the desingularized orbi-bundle  and the orbibundle itself are the same.

We recall Chen-Ruan's index formula:
\begin{proposition}\label{prop:chernnuber}
The Chern number of orbi-bundle and that of its de-singularization satisfies
(Proposition 4.2.1 \cite{CR})
$$ c_1(E)([\Sigma]) = c_1(|E|)([\Sigma]) + \sum_{i=1}^k \sum_{j=1}^n \frac{m_{i,j}}{m_i}.$$
\end{proposition}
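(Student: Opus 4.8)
The plan is to prove the Chen–Ruan index formula by a local-to-global comparison of curvature integrals using the explicit trivialization $\Psi_i$ that defines the desingularization $|E|$. Since the statement is about first Chern numbers of $E$ and $|E|$, and since $E$ and $|E|$ agree as smooth complex bundles away from the singular points $\bz$, I would set up the comparison by choosing a connection on each bundle and computing the difference of the two curvature integrals as a sum of local contributions concentrated near each $z_i$.

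\textbf{Step 1: choose compatible connections.} Fix an invariant (orbifold) connection $\nabla$ on $E$, and let $|\nabla|$ be a connection on the desingularization $|E|$ that agrees with $\nabla$ on $\bsg \setminus \bz$ but is smooth across each $z_i$ in the frame provided by $\Psi_i$. Outside small discs $D_i^\varepsilon$ of radius $\varepsilon$ about each $z_i$, the two connections coincide, so
$$c_1(E)([\Sigma]) - c_1(|E|)([\Sigma]) = \frac{\sqrt{-1}}{2\pi}\lim_{\varepsilon \to 0}\sum_{i=1}^k\left(\int_{D_i^\varepsilon} tr(F_\nabla) - \int_{D_i^\varepsilon} tr(F_{|\nabla|})\right).$$
Each term reduces, by Stokes' theorem, to a difference of boundary integrals $\oint_{\partial D_i^\varepsilon}(tr\, A_\nabla - tr\, A_{|\nabla|})$ of the connection one-forms on the circle of radius $\varepsilon$.

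\textbf{Step 2: compute the local defect.} This is where the representation data enters. On the punctured disc the transition between the orbifold frame and the desingularized frame is exactly $\Psi_i$, which multiplies the $j$-th coordinate by $z^{-m_{i,j}}$; hence the difference of connection one-forms is the logarithmic term $tr(\Psi_i^{-1} d\Psi_i)$, whose trace along $\partial D_i^\varepsilon$ is $-\sum_{j=1}^n m_{i,j}\, d(\arg z)$. Integrating over the circle and tracking the orbifold uniformizer $z \mapsto z^{m_i}$ (which is what makes the winding pick up the factor $1/m_i$), each singular point contributes $\sum_{j=1}^n m_{i,j}/m_i$. Summing over $i$ gives the stated correction term.

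\textbf{Main obstacle.} The delicate point is the bookkeeping in Step 2: one must be careful about the orbifold versus manifold normalization, i.e. whether the curvature integral is taken in the uniformizing $D^2$-chart (where the local group $\Z/m_i\Z$ acts) or downstairs on the quotient disc, since this is exactly what produces the denominators $m_i$. The half-integer or fractional nature of the answer is entirely an artifact of this normalization, and getting the winding-number count and the branch-covering degree consistent with the curvature conventions used in \eqref{chernintegral} is where the argument could most easily go wrong. An alternative that sidesteps this is to pass to a smooth branched cover $\pi:\WT\Sigma \to \bsg$ as in the previous subsection, pull back both $E$ and $|E|$, apply the already-established smooth Chern–Weil identity, and divide by $|G|$; but the direct local computation is more transparent and matches the desingularization framework of \cite{CR}.
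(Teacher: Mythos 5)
A preliminary remark on the comparison you asked for: the paper gives no proof of this proposition at all --- it is recalled verbatim as Proposition 4.2.1 of \cite{CR}, so there is no in-paper argument to measure your proposal against. Judged on its own, your sketch is the standard and essentially correct route: localize the difference of the two Chern--Weil integrals at the singular points and read off each local defect from the gauge transformation $\Psi_i$ defining $|E|$. Indeed $tr(\Psi_i^{-1}d\Psi_i)$ in the fiber directions is $-\sum_j m_{i,j}\,dz/z$; its integral over the boundary circle is $-2\pi\I\sum_j m_{i,j}/m_i$ because $\arg z$ advances only by $2\pi/m_i$ as the downstairs circle is traversed once; and multiplying by $\I/2\pi$ gives $\sum_j m_{i,j}/m_i$ with the correct sign, since the constant frame of $|E|$ corresponds upstairs to the section $z^{m_{i,j}}$ of $E$, which vanishes at $z_i$, making $E$ the more positive of the two bundles.

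Two points need tightening. First, Step 1 as literally written is inconsistent: a connection $|\nabla|$ on $|E|$ cannot both agree with $\nabla$ on all of $\bsg\setminus\bz$ and extend smoothly across $z_i$, precisely because the change of frame $\Psi_i$ is singular there; the connection form of $\nabla$ in the $|E|$-frame necessarily blows up at the origin. The fix is what your displayed formula already implicitly assumes: let $|\nabla|$ agree with $\nabla$ only outside $D_i^\varepsilon$ and interpolate arbitrarily inside (the value of $\int_\Si tr(F_{|\nabla|})$ does not depend on this choice), after which the Stokes reduction to $\oint_{\partial D_i^\varepsilon}$ of the difference of connection one-forms is exactly right --- provided the $E$-side integral is taken in the uniformizing chart with the $1/m_i$ weight, the normalization issue you correctly flag as the delicate point. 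Second, the ``alternative'' you mention does not actually sidestep the local computation: on a smooth branched cover $\pi:\WT{\Si}\to\bsg$ the bundles $\pi^*E$ and $\pi^*|E|$ still differ by the twists $z^{m_{i,j}}$ at the preimages of the $z_i$, so the same winding-number bookkeeping reappears upstairs. Neither point is fatal; with Step 1 rephrased the argument goes through and supplies a proof the paper itself omits.
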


Now, as the local group action on the fibers of the desingularized
orbi-bundle $|E|$ is trivial, one can think of it as a smooth vector
bundle on $\Sigma$ which is analytically the same as $E$ (In other
words,  there exist a canonically associated vector bundle $|E|$
over the smooth Riemann surface $\Sigma$). Hence, for the bundle
$|E|$, the ordinary index theory can be applied, which provides the
required index theoretic tools for the orbibundle $E$.

Now, we recall a definition of the desingularized Maslov index, which determines the virtual dimension of the moduli space of J-holomorphic orbi-discs from \cite{CP}
Let $X$ be a symplectic orbifold and $N$ be a Lagrangian submanifold (which do not
contain any orbifold singularity).
Let $\bsg$ be an orbi-disc with interior orbifold singularity $(\bz_1,\cdots,\bz_k)$.
Let $u:(\bsg,\partial \bsg) \to (X,N)$ be an orbifold J-holomorphic disc with Lagrangian boundary condition. Then, $E:=u^*TX$ is a complex orbi-bundle over $\bsg$, with
Lagrangian subbundle $L:=u|_{\partial \bsg}^*TN$ at $\partial \bsg$.

\begin{definition}
Let $|E|$ be the desingularized bundle over $\bsg$( or $\Sigma$), which still have the Lagrangian subbundle
  at the boundary from $L$.
The Maslov index of the bundle pair $(|E|,L)$ over $(\Sigma,\partial \Sigma)$ is called
 the {\em desingularized Maslov index} of $(E,L)$, and denoted as $\mu^{de}(E,L)$.
\end{definition}

We find a relation of the desingularized Maslov index of \cite{CP} and the Maslov index in this paper.
\begin{proposition}
We have
$$\mu_{CW}(E,L) = \mu^{de}(E,L) + 2 \sum_{i=1}^k \sum_{j=1}^n \frac{m_{i,j}}{m_i}.$$
\end{proposition}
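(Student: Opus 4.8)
The plan is to compare the Chern--Weil Maslov index of the orbi-bundle $(E,L)$ with the Maslov index of its desingularization $(|E|,L)$ by reducing everything to a curvature integral and isolating the contribution of the singular points $z_i$. Since both $\mu_{CW}(E,L)$ and $\mu^{de}(E,L)=\mu_{CW}(|E|,L)$ are given by $\frac{\I}{\pi}\int tr(F_\nabla)$ for appropriate $L$-orthogonal connections (using Theorem \ref{thm:main} to identify $\mu^{de}$ with the Chern--Weil integral for the smooth bundle $|E|$), the difference $\mu_{CW}(E,L)-\mu^{de}(E,L)$ will be a sum of local contributions concentrated near each $z_i$. First I would choose connections adapted to the desingularization trivialization $\Psi_i$: away from the discs $D_i$ the orbi-bundle $E$ and the desingularized bundle $|E|$ are literally the same smooth bundle, so I can use the \emph{same} $L$-orthogonal connection there, making the integrals over $\bsg\setminus\bigcup_i D_i$ cancel exactly.

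Next I would compute the two local contributions over each $D_i$ separately. On the uniformizing chart $(D_i\times\C^n,\Z_{m_i})$ the orbi-bundle connection pulls back to an invariant connection on the cover; on the desingularized side the map $\Psi_i:(z,w_j)\mapsto(z^{m_i},z^{-m_{i,j}}w_j)$ changes the trivialization by a diagonal gauge transformation $\mathrm{diag}(z^{m_{i,1}},\ldots,z^{m_{i,n}})$ (in the downstairs coordinate). The key computation is that this gauge change alters the connection $1$-form by $\sum_j m_{i,j}\, d(\arg z)$ type terms, so that the difference of the two curvature integrals over $D_i$ is controlled by the winding contributed by the diagonal representation weights $m_{i,j}/m_i$. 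I expect the cleanest route is to mimic Chen--Ruan's closed-surface computation (Proposition \ref{prop:chernnuber}): there $c_1(E)-c_1(|E|)=\sum_{i,j} m_{i,j}/m_i$, and the factor $2$ in the present statement is exactly the ratio between the Maslov normalization $\frac{\I}{\pi}$ and the Chern normalization $\frac{\I}{2\pi}$ appearing in \eqref{chernintegral}.

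Concretely, I would either (a) invoke Proposition \ref{prop:chernnuber} on the doubled good orbifold $\bsg_{double}$, where the orbifold first Chern number and its desingularized analogue differ by $\sum_{i,j} m_{i,j}/m_i$ over \emph{each} copy, and then use the doubling identification of Maslov index with half the first Chern number of the double (as in the second proof of Theorem \ref{thm:main}, so that the Maslov index picks up $2\sum_{i,j} m_{i,j}/m_i$); or (b) carry out the local gauge-transformation calculation on $D_i$ directly, integrating the exact correction term $\frac{\I}{\pi}\int_{D_i} d\eta_i = \frac{\I}{\pi}\int_{\partial D_i}\eta_i$ where $\eta_i$ is the $tr$ of the logarithmic-derivative $1$-form of the diagonal gauge change, and reading off $2\sum_j m_{i,j}/m_i$ as a boundary winding. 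Approach (a) is more economical since it reuses machinery already in the paper.

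The main obstacle will be bookkeeping the orbifold boundary/Stokes' theorem correctly at the singular points: because $E$ is genuinely non-trivial as an orbi-bundle and $\Psi_i$ is only defined on the \emph{punctured} disc, one must verify that the difference of connection $1$-forms extends to an honest smooth (or at worst integrable) correction on $D_i$ and that the local curvature integrals converge to the stated fractional values $m_{i,j}/m_i$ rather than integers. Getting the orientation and the factor of $2$ unambiguous—tracing it to the Maslov-versus-Chern normalization and to the fact that the Lagrangian boundary data is untouched by the desingularization near the \emph{interior} points $z_i$—is the delicate point; the rest is a direct adaptation of the Chern--Weil arguments already established in Sections 2 and 3.
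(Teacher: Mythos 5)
Your approach (a) is precisely the paper's proof: double the bundle pair, identify $\mu_{CW}(E,L)$ with $c_1(E_\C)([\Sigma_\C])$ via the curvature integral of $\nabla_\C$, apply Chen--Ruan's formula (Proposition \ref{prop:chernnuber}) on the doubled orbifold, and use $\mu^{de}(E,L)=c_1(|E_\C|)([\Sigma_\C])$ together with $|E_\C|=|E|_\C$. One correction to your bookkeeping: by Theorem \ref{degree} the Maslov index equals the \emph{full} first Chern number of the double, not half of it; the $\frac{\I}{\pi}$ versus $\frac{\I}{2\pi}$ normalization is exactly cancelled by $\int_{\Sigma_\C}tr(F_{\nabla_\C})=2\int_{\Sigma}tr(F_{\nabla})$, so in the doubling route the factor $2$ in the statement comes not from the normalization but from the fact that $\Sigma_\C$ carries the two singular points $z_i$ and $\sigma(z_i)$, each contributing $\sum_j m_{i,j}/m_i$ to Chen--Ruan's correction term (your attribution of the $2$ to the normalization is the right accounting only for the purely local route (b)). With that adjustment your chain of identities is exactly the paper's and lands on the stated formula.
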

\begin{proof}
We first consider the double $E_\C$ of the bundle pair $(E,L)$ for
bordered Riemann surface with interior orbifold singularities.
Then we have from Chen-Ruan's formula that
$$c_1(E_\C)([\Sigma_\C]) = deg(E_\C) = c_1(|E_\C|)([\Sigma_\C]) + 2\sum_{i=1}^k \sum_{j=1}^n \frac{m_{i,j}}{m_i}.$$

Note that from \cite{KL}, we have  $\mu^{de}(E,L) =   c_1 ( |E|_\C)([\Sigma_\C])$, and as $|E_\C| = |E|_\C$ holds,
we have  $\mu^{de}(E,L) =  c_1(|E_\C|)([\Sigma_\C])$.

Note that  given an orthogonal connection $\nabla$ on $(E,L)$ over $(\Sigma,\bz,\bm)$,
we can find a connection $\nabla_\C$ on $E_\C$ as in the section 3.2.
From the Chern-Weil definition of Maslov index $\mu_{CW}(E,L)$ over $(\Sigma,\bz,\bm)$,
we find that
$$\mu_{CW}(E,L) = \frac{i}{\pi}\int_{\Sigma}^{orb} tr(F_\nabla)
= \frac{i}{2\pi}\int_{\Sigma_\C}^{orb} tr(F_{\nabla_\C}) = c_1(E_\C)([\Sigma_\C]). $$

Hence, we obtain the proposition.
\end{proof}

\bibliographystyle{amsalpha}

\end{document}